\documentclass[a4paper]{article}

\usepackage[utf8]{inputenc}
\usepackage[T1]{fontenc}
\usepackage[francais,english]{babel}
\usepackage{refcount}

\usepackage{enumitem}
\usepackage{amsmath,amssymb,amsfonts,amsthm}
\usepackage{mathrsfs}
\usepackage{amsfonts}
\usepackage{amssymb}
\usepackage{latexsym}
\usepackage{comment}
\usepackage[normalem]{ulem}

\usepackage{xcolor}
\definecolor{hypcolor}{RGB}{0,110,90}   
\usepackage[colorlinks=true, linkcolor=red, citecolor=red, urlcolor=blue]{hyperref}
\newcommand{\hypref}[2]{%
    \hyperlink{#1}{\textcolor{hypcolor}{\textbf{#2}}}%
}

\def\C{\mathbb C}

\def\et0{e^{tA}x_0}

\numberwithin{equation}{subsection}

\makeatletter
\renewcommand{\theequation}{%
  \ifnum\value{subsection}=0
    \mbox{\thesection.\arabic{equation}}%
  \else
    \mbox{\thesubsection.\arabic{equation}}%
  \fi
}
\makeatother

\newtheorem{thm}{Theorem}
\newtheorem{Prop}{Proposition}
\newtheorem{Def}{Definition}
\newtheorem{Lem}{Lemma}
\newtheorem{Cor}{Corollary}
\newtheorem{rk}{Remark}
\newtheorem{CE}{Counterexample}

\author{
\textsc{Shri Lal Raghudev Ram Singh}\thanks{
University of Waterloo, email: \texttt{slrrsingh@uwaterloo.ca}
},
\quad\textsc{Roberto Guglielmi}\thanks{University of Waterloo,
email: \texttt{roberto.guglielmi@uwaterloo.ca}}
\thanks{The authors acknowledge the support of the Natural Sciences and Engineering Research
Council of Canada (NSERC), funding reference number RGPIN-2021-02632.
}
}
\title{Quasi-compactness and uniform stabilization\\
on general Banach spaces under $\theta$-subordinate\\ perturbations of semigroup generators}
\date{}

\begin{document}

\maketitle

\begin{abstract}
We prove that the quasi-compactness of an analytic semigroup is preserved under $\theta$-subordinate perturbations of its generator on general Banach spaces, provided the perturbations are compact along trajectories. This allows us to prove the permanence of uniform exponential stability for an analytic semigroup under similar perturbations of its generator, provided the perturbed generator generates a strongly stable semigroup. We then show how the analyticity assumption can be relaxed to the class of Crandall-Pazy semigroups under a smaller class of $\theta$-subordinate perturbations, and to immediately differentiable semigroups under bounded perturbations. As an application of the stabilization result, we consider the one-dimensional Neumann Laplacian on a non-reflexive Banach space. Finally, we present counterexamples that demonstrate the lack of uniform stabilization under the larger class of $A$-bounded perturbations.

\end{abstract}

\bigskip
\noindent
\textbf{Key words: } 
Quasi-compact semigroups, analytic semigroups,
uniform exponential stability,
stabilization, relatively bounded perturbations.
\smallskip

\noindent
\textbf{AMS subject classifications: } Primary: 46B50, 47A55, 47D06, 47B07; Secondary: 93D23, 35B35

\section{Introduction}\label{intro}

Let $A:D(A)\subset X\to X$ be the infinitesimal generator of a strongly continuous semigroup $\left(e^{tA}\right)_{t\geq 0}$ of bounded linear operators on a Banach space $X$, equipped with the norm $\|\cdot\|$. We denote by $\|\cdot\|_{\mathcal{L}}$ the operator norm on $\mathcal{L}(X)$.

In a typical abstract stabilization problem, one starts from the open-loop system
$$
\dot{x}(t)=Ax(t)+u(t),\qquad x(0)=x_0\in X,
$$
and seeks to choose $u(t)=Bx(t)$ in such a way that the resulting closed-loop dynamics
\begin{equation}\label{closedSys}
    \dot{x}(t)=(A+B)x(t),\qquad x(0)=x_0\in X,
\end{equation}
possess a desired stability property. For an operator $B:D(B)\subset X\to X$,  let  $A+B$ generates a strongly continuous semigroup $\left(e^{t(A+B)}\right)_{t\geq 0}$ on $X$. In this paper, we discuss general assumptions on the feedback gain operator $B$ and on the semigroup $\left(e^{tA}\right)_{t\geq 0}$ under which the strong stability of the perturbed semigroup $\left(e^{t(A+B)}\right)_{t\geq 0}$ implies its uniform exponential stability, that is, the existence of constants $M\geq 1$ and $\omega>0$ such that
$$
\|e^{t(A+B)}\|_{\mathcal{L}}
\leq M e^{-\omega t},
\qquad t\geq 0.
$$

{The aforesaid stabilization problem can also be viewed as the study of the stability properties of the semigroup $\left(e^{tA}\right)_{t\geq 0}$ under perturbations. In particular, we are interested in investigating conditions on the perturbation operator $B$ and the semigroup generated by $A$ under which the uniform exponential stability of $\left(e^{tA}\right)_{t\geq 0}$ is preserved.}

{\subsection{Preliminaries} Let us recall the definitions of the following classes of unbounded perturbations.
\begin{Def}[$A$-boundedness, {\cite[Definition III.2.1]{EnNa}}]\label{Abound}
A linear operator $B:D(B)\to X$ is said to be \emph{$A$-bounded} if \( D(A) \subseteq D(B) \), and there exist constants $a, b \geq 0$  such that 
\begin{equation}\label{Aboundedness}
\|Bx\| \;\le\; a\,\|Ax\| \;+\; b\,\|x\|,\qquad \forall x\in D(A).
\end{equation}
The $A$-bound of $B$ is defined by
$$ a_0:=\inf\left\{
a\geq 0:\ \text{there exists } b\geq0
\text{ such that \eqref{Aboundedness} holds}
\right\}.
$$ In standard perturbation theory, some authors refer to $A$-bounded perturbations as \emph{relatively bounded perturbations} (see, for instance, \cite{Kato,Pazy}).
\end{Def}
\begin{Def}[$\theta$-subordination {\cite[Chapter 1]{Mar88}}]\label{theta-sub}
A linear operator $ B : D(B) \to X $ is said to be \emph{$\theta$-subordinate} to $A$ if $D(A) \subseteq D(B)$  and, for some \( 0 \leq \theta < 1 \), there exists a constant $c>0$ such that
\[
\| Bx \| \le c\, \| Ax \|^{\theta} \| x \|^{1 - \theta}, \qquad \forall x \in D(A).
\]
\(B\) is also termed \emph{subordinate of order} \(\theta\) in the literature (see, for example, \cite[Definition~7.2]{Krein1971}).
\end{Def}
\begin{Def}[$(-A)^\theta$-boundedness]\label{athetabound}
A linear operator $B:D(B)\to X$ is called \emph{$(-A)^\theta$-bounded} if $D((-A)^\theta)\subseteq D(B)$ for some $0\leq \theta <1$, and there exists a constant $d>0$ such that 
\[
\|Bx\|\leq d\,\|(-A)^\theta x\|, \qquad \forall x\in D((-A)^\theta).
\]
\end{Def}
\begin{rk}
Among the three perturbation notions defined above,
$A$-boundedness is the most general one, while $(-A)^\theta$-boundedness is the
most restrictive. The notion of $\theta$-subordination lies between the other two and is the perturbation framework adopted in the present work. Indeed, using the classical interpolation inequality~\cite[Theorem~6.10]{Pazy} and Young's inequality, we obtain that, for every $x\in D(A)$ and every $0<\theta<1$, there exists a constant
$c_\theta>0$ such that for every $\varepsilon>0$
\begin{equation}\label{rel-theta-Abound-relation}
 \|(-A)^\theta x\| 
   \leq c_\theta \|Ax\|^\theta \|x\|^{1 - \theta}
   \leq  \varepsilon\|Ax\|+C(\varepsilon)\|x\|,
\end{equation} where,
$
C(\varepsilon)=
c_\theta(1-\theta)
\left({c_\theta\theta}/{\varepsilon}
\right)^{\frac{\theta}{1-\theta}}.
$ For $\theta=0$, the conclusion follows directly.
Consequently, $(-A)^\theta$-boundedness implies $\theta$-subordination with the same exponent $0\leq\theta<1$, which in turn implies $A$-boundedness. However, the converse statements are not true: $\theta$-subordination does not imply $(-A)^\theta$-boundedness~\cite{Markus-Mat1982}, and it is a stronger property than
$A$-bounded\-ness (see, for example, the perturbations considered in Section~\ref{sec4}). Hence, all the results on the preservation of quasi-compactness
and uniform exponential stability under $\theta$-subordinate perturbations established in Sections~\ref{sec2}-\ref{sec3} apply, in particular, to the class of $(-A)^\theta$-bounded perturbations in the sense of Definition~\ref{athetabound}. On the other hand, in Section~\ref{sec4} we construct an $A$-bounded perturbation which fails to be $\theta$-subordinate and prove the lack of uniform stabilization under such $A$-bounded perturbations. 
\end{rk}
We next recall the definition of quasi-compactness (see \cite[Definition~V.3.4]{EnNa}), which plays a crucial role in the stability analysis developed in this work.
\begin{Def} A $C_0$- semigroup $\left(e^{tA}\right)_{t\geq0}$ on a Banach space $X$ is called \emph{quasi-compact} if 
$$\lim_{t\to\infty}\inf \Big\{\|e^{tA}-K\|_{\mathcal{L}}\,\Big| \, K\in\mathcal{L}(X), \, K \,\text{compact} \Big\}=0.$$
\end{Def} Quasi-compact semigroups are characterized by a negative \emph{essential growth bound} $w_{\text{ess}}$ \cite[Proposition V.3.5]{EnNa}. That is, for a quasi-compact semigroup $\left(e^{tA}\right)_{t\geq0}$, we have $$w_{\text{ess}}:= \inf_{t>0}\frac{1}{t}\,\log\|e^{tA}\|_{\text{ess}}<0,$$where $\|\cdot\|_{\text{ess}}$ is the \emph{essential norm} defined as \begin{equation}
    \|\cdot\|_{\text{ess}}:=\inf\Big\{\|\cdot\,-K\|_{\mathcal{L}}\,\Big| \, K\in\mathcal{L}(X), \, K \,\text{compact} \Big\}
\end{equation}
To formulate the spectral decomposition of a quasi-compact semigroup, we now recall some terminology concerning the spectrum and the resolvent. For an operator $A$, we use $\rho(A)$ and $\sigma(A)$ to denote its resolvent set and spectrum, respectively. In what follows, for $z\in\C$, we denote by $\Re z$ and $\Im z$ its real and imaginary parts, respectively. Given $\lambda\in\rho(A)$, we write $R(\lambda,A):=(\lambda I-A)^{-1}$ for the resolvent operator of $A$ and denote the \emph{spectral bound} of $A$ by $s(A),$ which is defined as \begin{equation}\label{spcbd}
    s(A)= \sup\{\Re\lambda \,\,|\,\,\lambda\in\sigma(A)\}.
\end{equation}  We recall that, for an isolated point $\mu\in\sigma(A)$, the resolvent $R(\lambda,A)$ admits the Laurent series expansion \cite[\S IV.1.17]{EnNa}
$$
R(\lambda,A)=\sum_{n=-\infty}^{\infty}
(\lambda-\mu)^n U_n
$$for $0<|\lambda-\mu|<\delta$ and some sufficiently small $\delta>0$, where the coefficients $U_n$ are bounded operators given by
\begin{equation*}
U_n=\frac{1}{2\pi i}
\int_{\gamma}
\frac{R(\lambda,A)}
     {(\lambda-\mu)^{n+1}}
\,\mathrm{d}\lambda,
\qquad n\in\mathbb{Z},
\end{equation*}where $\gamma$ is a positively oriented Jordan path contained in $\rho(A)$ whose interior contains $\mu$ but no point of $\sigma(A)\setminus\{\mu\}$. The coefficient $U_{-1}$ coincides with the spectral projection $P :=\frac{1}{2\pi i}\int_\gamma R(\lambda,A)\,\mathrm{d}\lambda$
associated with the decomposition
$\sigma(A)=
\{\mu\}
\cup
\bigl(\sigma(A)\setminus\{\mu\}\bigr)
$ of the spectrum of $A$. This coefficient $U_{-1}$ is referred to as the \emph{residue} of $R(\cdot,A)$ at $\mu$. If there exists $k>0$ such that
$$
U_{-k}\neq0,
\qquad
U_{-n}=0
\quad\text{for all }n>k,
$$
then $\mu$ is said to be a \emph{pole} of $R(\cdot,A)$ of order $k$, and we write $$ U_{-k} =\lim_{\lambda\to\mu}(\lambda-\mu)^{k}\,R(\lambda,A).$$ In our analysis, we rely on the spectral decomposition theorem for quasi-compact semigroups~\cite[Theorem~V.3.7]{EnNa}, which states the following.
\begin{thm}\label{thm-quasicompact-spectral-decomp}
Let $\left(e^{tA}\right)_{t\geq0}$ be a quasi-compact strongly continuous
semigroup with generator $A$ on a Banach space $X$. Then the following holds:
\begin{enumerate}
\item[(i)] The set
$
\left\{\mu\in\sigma(A): \Re\mu\geq0\right\}
$
is finite, possibly empty, and consists of poles of $R(\cdot,A)$ of finite
algebraic multiplicity.
\item[(ii)] Denote by
$\mu_1,\ldots,\mu_m$ the poles of $R(\cdot,A)$,
their residues by
$U_1,\ldots,U_m,$
and the corresponding orders of the poles by $k_1,\ldots,k_m $.
Then, for every $t\geq0$, we have
\begin{equation}\label{spectdecomp}
   e^{tA}=T_1(t)+T_2(t)+\cdots+T_m(t)+R(t) 
\end{equation}
where
$$T_n(t)=e^{\mu_n t}
\sum_{j=0}^{k_n-1}
\frac{t^j}{j!}(A-\mu_n)^j\,U_n,
\qquad
t\geq0,\quad 1\leq n\leq m,
$$
and
$$
\|R(t)\|_{\mathcal L(X)}\leq
\widehat M e^{-\widehat w t},
\qquad t\geq0,
$$
for some constants $\widehat w>0$ and $\widehat M\geq1$.
\end{enumerate}
\end{thm}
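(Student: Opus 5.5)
The plan is to work with the essential spectrum of $T:=e^{t_0A}$ for a suitable fixed $t_0>0$ and then transfer the information back to the generator via the spectral mapping theorem for the point spectrum. Quasi-compactness gives $w_{\text{ess}}<0$. Hence for any $t_0>0$ the essential spectral radius satisfies $r_{\text{ess}}(e^{t_0A})=e^{t_0 w_{\text{ess}}}<1$. It follows that
\[
\sigma(e^{t_0A})\cap\{|z|\geq r\}
\]
is finite for every $r>r_{\text{ess}}$, and consists of isolated eigenvalues whose Riesz projections have finite rank. This is the standard Riesz--Schauder/Browder theory for operators whose essential spectral radius is smaller than $r$.

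\textbf{Part (i).} Fix $r\in(r_{\text{ess}},1)$ and apply the spectral mapping theorem for the point and residual spectrum, $e^{t_0\sigma_p(A)}=\sigma_p(e^{t_0A})\setminus\{0\}$. Every $\mu\in\sigma(A)$ with $\Re\mu\geq0$ then gives a point $e^{t_0\mu}$ with $|e^{t_0\mu}|\geq1$. The cleanest route is to first show that such a $\mu$ does not lie in the essential spectrum, so that it is an eigenvalue. To do this I would decompose $X=X_1\oplus X_2$ using the Riesz projection $Q$ of $e^{t_0A}$ for the finite set $\sigma(e^{t_0A})\cap\{|z|\geq r\}$. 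Here $X_1=QX$ is finite dimensional and both subspaces are invariant under $e^{tA}$, since $Q$ commutes with the semigroup. On $X_1$ the generator is a matrix with finitely many eigenvalues. On $X_2$ we have $r(e^{t_0A}|_{X_2})<r<1$, so the restricted semigroup has negative growth bound, and hence $\sigma(A|_{X_2})\subset\{\Re\lambda\leq t_0^{-1}\log r<0\}$. Consequently $\{\mu\in\sigma(A):\Re\mu\geq0\}\subset\sigma(A|_{X_1})$ is finite. Each such $\mu$ is a pole of $R(\cdot,A)=R(\cdot,A|_{X_1})\oplus R(\cdot,A|_{X_2})$, because the first summand is a matrix resolvent and the second is holomorphic near $\mu$. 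Its residue has finite rank, so its algebraic multiplicity is finite.

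\textbf{Part (ii).} I would refine the decomposition by letting $P=U_1+\cdots+U_m$, the sum of the residues at $\mu_1,\dots,\mu_m$. These residues are the Riesz projections of $A|_{X_1}$ restricted to the eigenvalues with nonnegative real part; they are invariant under $e^{tA}$. On $U_nX$ the operator $A-\mu_n$ is nilpotent of order $k_n$, so $e^{tA}U_n=e^{\mu_nt}e^{t(A-\mu_n)}U_n$. Expanding the exponential and truncating, since $(A-\mu_n)^jU_n=0$ for $j\geq k_n$, gives exactly the formula for $T_n(t)$. Define $R(t):=e^{tA}(I-P)$. On $(I-P)X$ the generator's spectrum lies in $\{\Re\lambda<0\}$: it consists of finitely many eigenvalues from $X_1$ with negative real part, together with $\sigma(A|_{X_2})$, which lies in a half-plane $\Re\lambda\leq -\delta$. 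The restricted semigroup is again quasi-compact, and for quasi-compact semigroups the growth bound equals $\max(w_{\text{ess}},s(A))$. Both quantities are negative here, which yields $\|R(t)\|\leq\widehat Me^{-\widehat wt}$.

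\textbf{Main obstacle.} The main difficulty is the step $w_0=\max(w_{\text{ess}},s(A))$, that is, the fact that $\sigma(A|_{X_2})$ stays in a half-plane strictly to the left of zero. This is what rules out spectrum accumulating on the imaginary axis. I would handle it entirely through $e^{t_0A}$, using the spectral radius formula $w_0=t_0^{-1}\log r(e^{t_0A})$ on each invariant piece. This avoids any spectral mapping theorem for the full spectrum, which fails for general $C_0$-semigroups.
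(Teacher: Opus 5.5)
The paper does not prove this statement; it quotes it from Engel--Nagel [Theorem V.3.7]. Your argument is correct and is essentially the standard proof behind that citation: the finitely many spectral points of $e^{t_0A}$ in $\{|z|\geq r\}$, with $r_{\text{ess}}(e^{t_0A})<r<1$, give a finite-rank Riesz projection $Q$; the semigroup is a matrix exponential on $QX$ and has negative growth bound on $(I-Q)X$; refining by the residues $U_1,\dots,U_m$ then yields (i) and (ii). Two ingredients are superfluous: the opening appeal to the spectral mapping theorem for the point spectrum, and the formula $\omega_0=\max(w_{\text{ess}},s(A))$. Once $X=QX\oplus(I-Q)X$ is in hand, you can bound $e^{tA}(I-P)$ directly on $(Q-P)X$ (finite-dimensional, eigenvalues with negative real part) and on $(I-Q)X$, exactly as your final paragraph does.
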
}

\subsection{Background review}  {Early foundational work in the study of stabilization of PDEs was carried out by Russell (see \cite{Russell-1969, Russell-1975}), who established decay estimates and control-theoretic methods for hyperbolic equations, highlighting fundamental limitations on achievable decay rates. The general abstract linear stabilization problem in Hilbert spaces was subsequently investigated in \cite{Slemrod1972}.} Furthermore, in the Hilbert space setting, Gibson \cite{Gibson-1980} proved that if $\left(e^{tA}\right)_{t\geq0}$ is a
strongly stable contraction semigroup and $B\in\mathcal L(H)$ is compact, then
the uniform exponential stability of $\left(e^{t(A+B)}\right)_{t\geq0}$ implies
the uniform exponential stability of the original semigroup $\left(e^{tA}\right)_{t\geq0}$. In other words, compact perturbations (as well as compact feedback gain operators, in the sense of~\eqref{closedSys}) cannot produce uniform exponential stability unless the unperturbed
semigroup is already uniformly exponentially stable. This result also indicates
the permanence of exponential stability under compact
perturbations of the generator of a uniformly stable semigroup of bounded
operators. This result was later extended by Triggiani
\cite{Triggiani1988, Triggiani-1989}, who removed the contraction assumption on~$\left(e^{tA}\right)_{t\geq0}$ and proved the permanence of
exponential stability under compact perturbations. Alabau-Boussouira and
Cannarsa~\cite{AB-C} considered exponentially stable semigroups on
reflexive Banach spaces and replaced the compactness of the perturbation
operator $B$ itself by the weaker assumption that $B$ is bounded and $Be^{tA}$ is compact for every
$t>0$. This result was later extended in~\cite{SLRRS-PC-RG} to a class of unbounded
perturbations~$B$. More precisely, for a class of $\theta$-subordinate operators~$B$, in~\cite{SLRRS-PC-RG} we proved the permanence of exponential stability for generators of exponentially
stable analytic semigroups. Applications of the abstract results to uniformly parabolic equations, degenerate/singular parabolic equations, coupled plate dynamics, generalized coupled systems of Kirchhoff–Love plates with membrane-like electrical networks, and interior degenerate/singular parabolic equations were presented in \cite[\S 3]{SLRRS-PC-RG} and \cite{Shri-Cann-R}, respectively.

A substantial amount of work has addressed the problem of linear stabilization in infinite-dimensional Hilbert spaces (see, for instance, \cite{Russell-1969, Russell-1975, Slemrod1972, Slemrod-1974}). The contributions of Gibson \cite{Gibson-1980} and Triggiani \cite{Triggiani1988, Triggiani-1989} are also framed in Hilbert spaces, whereas the stabilization results of \cite{AB-C, SLRRS-PC-RG} were established in reflexive Banach spaces. The aim of the present paper is to extend the result of \cite{SLRRS-PC-RG} to the setting of general Banach spaces. {This extension is not a straightforward consequence of the arguments developed in \cite{SLRRS-PC-RG}, since many compactness results in reflexive Banach spaces are no longer available in an arbitrary Banach space. Instead, in this paper we provide a different proof in arbitrary Banach spaces, relying on completely different techniques compared to the methods used in~\cite{SLRRS-PC-RG}. Here, we develop a perturbation theory for quasi-compactness of semigroups on general Banach spaces, and then show how this can be exploited in proving permanence of exponential stability for the perturbed semigroup.}

\paragraph{Approaches and novelties.}
The proof of \cite[Theorem 1.1]{AB-C} is based on the variation of constants formula applied to \eqref{closedSys}, which reads as
$$
e^{t(A+B)}x=
e^{tA}x
+
\int_0^t e^{(t-s)(A+B)}Be^{sA}x \,\mathrm{d}s ,
$$
and the reflexivity of the underlying Banach space plays an essential role. Indeed, the argument relies on extracting a weakly convergent subsequence from a bounded sequence in the unit ball and then using the compactness of the operator $Be^{tA}$ to obtain strong convergence. Such an approach is no longer available in general Banach spaces, since bounded subsets of a Banach space need not be weakly compact. In the present paper, we overcome this difficulty by avoiding weak compactness arguments altogether. Instead, we prove directly that the perturbation term arising from the variation of constants formula is compact for every positive time.  To do so, we first show that quasi-compactness of an analytic semigroup is preserved under $\theta$-subordinate perturbations, provided that the perturbation is compact along the trajectories of the unperturbed semigroup. 
Quasi-compact semigroups are closely connected to stability properties. The spectral decomposition theorem (Theorem \ref{thm-quasicompact-spectral-decomp}) for quasi-compact semigroups tells us that the long-time dynamics of such semigroups roughly split into a compact finite-dimensional spectral part and an exponentially decaying remainder. 
As a consequence of this theorem, every exponentially stable semigroup is quasi-compact; however, the converse is not true in general. {A sufficient condition under which the converse statement holds is given as a corollary to Theorem \ref{thm-quasicompact-spectral-decomp} in \cite[Corollary V.4.7]{EnNa2}. It states that a quasi-compact strongly continuous semigroup with generator $\widetilde{A}$ is uniformly exponentially stable if and only if $s(\widetilde{A})<0,$ where $s(\widetilde{A})$ denotes the spectral bound defined in \eqref{spcbd}. We prove that quasi-compactness together with strong stability implies uniform exponential stability by showing that strong stability rules out the finite-dimensional spectral modes associated with spectral values in the closed right half-plane, while the remaining part of the semigroup decays exponentially by Theorem \ref{thm-quasicompact-spectral-decomp}.}

\paragraph{Organization of the paper.} The remainder of the paper is organized as follows. In Section~\ref{sec2}, we  provide sufficient conditions to ensure the
{permanence} of quasi-compactness for analytic semigroups under $\theta$-subordinate perturbations which
are compact along trajectories. We also derive corresponding variants for the class of Crandall--Pazy semigroups and immediately differentiable semigroups. In
Section~\ref{sec3} we prove the main uniform stabilization theorem on general Banach spaces, and apply it to a perturbed diffusion operator in Section~\ref{app}. In Section~\ref{sec4}, we illustrate the necessity of the assumptions in the main theorem by constructing counterexamples that show the
lack of uniform stabilization for general $A$-bounded perturbations or dropping the assumption on the compactness-along-trajectories of the perturbation.

\section{Permanence of Quasi-Compactness}\label{sec2}
In \cite[Proposition V.4.9]{EnNa2}, the authors proved that if the generator of a strongly continuous quasi-compact semigroup is perturbed by a compact operator, then the perturbed operator generates a quasi-compact semigroup again. We extend this result to accommodate unbounded perturbations of $\theta$-subordinate type under certain conditions on the perturbation and a regularity assumption on the semigroup. Precisely, in the sequel, we prove that quasi-compactness of an analytic semigroup is preserved under $\theta$-subordinate perturbations of its generator, provided the perturbations are compact along trajectories.
\begin{Lem}\label{quasi-compact-preservatn}
    Let $A$ be the generator of a strongly continuous quasi-compact analytic semigroup $\left(e^{tA}\right)_{t\geq0}$ of bounded linear operators on the Banach space $X$. Consider a linear operator $B:D(B)\to X$ such that 

    \begin{itemize}
        \item[\hypertarget{H1}{\emph{\textbf{H1)}}}] $B$ is $\theta$-subordinate to $A$ in the sense of Definition \ref{theta-sub}.

        \item[\hypertarget{H2}{\emph{\textbf{H2)}}}] $\forall\,t>0$, $Be^{tA}$ is compact.  

    \end{itemize}
    
     Then, the semigroup $\left(e^{t(A+B)}\right)_{t\geq0}$ generated by $A+B$ is quasi-compact.
\end{Lem}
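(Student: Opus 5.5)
Our plan is to show that for every $t>0$ the difference $e^{t(A+B)}-e^{tA}$ is compact. Since the essential norm is invariant under compact perturbations, this gives $\|e^{t(A+B)}\|_{\text{ess}}=\|e^{tA}\|_{\text{ess}}$ for all $t>0$. Because $(e^{tA})_{t\ge0}$ is quasi-compact, there is some $t_0$ with $\|e^{t_0A}\|_{\text{ess}}<1$, and hence $\|e^{t_0(A+B)}\|_{\text{ess}}<1$. By \cite[Proposition V.3.5]{EnNa} the perturbed semigroup is then quasi-compact.

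First we fix the generation framework. By \hypref{H1}{H1)} and \eqref{rel-theta-Abound-relation}, $B$ is $A$-bounded with $A$-bound $0$. The classical perturbation theorem for analytic semigroups then shows that $A+B$, with domain $D(A)$, generates an analytic semigroup. Moreover, analyticity gives $\|Ae^{sA}\|_{\mathcal L}\le M/s$ for $0<s\le1$. Together with \hypref{H1}{H1)} this yields
\[
\|Be^{sA}\|_{\mathcal L}\le c\,\|Ae^{sA}\|^\theta\|e^{sA}\|^{1-\theta}\le C s^{-\theta},\qquad 0<s\le 1,
\]
and this bound is integrable near $0$ since $\theta<1$.

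Next, for $x\in D(A)$ the variation of constants formula
\[
e^{t(A+B)}x-e^{tA}x=\int_0^t e^{(t-s)(A+B)}Be^{sA}x\,\mathrm{d}s
\]
holds. Differentiating $s\mapsto e^{(t-s)(A+B)}e^{sA}x$ is legitimate because $D(A)=D(A+B)$. The integrand is bounded by $C's^{-\theta}\|x\|$, so the right-hand side defines a bounded operator $V(t)$, given by a Bochner-integrable, strongly measurable integral, and the identity extends to all $x\in X$ by density.

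The key step, and the main obstacle, is to prove that $V(t)$ is compact on a general Banach space without any weak-compactness argument. We split $V(t)=\int_0^\varepsilon+\int_\varepsilon^t$. The first piece has norm at most $C'\varepsilon^{1-\theta}/(1-\theta)$, which tends to $0$. For the second piece, we use that $s\mapsto Be^{sA}=Be^{\varepsilon A/2}e^{(s-\varepsilon/2)A}$ is norm-continuous on $[\varepsilon,t]$. This holds because analytic semigroups are norm-continuous for $s>0$ and $Be^{\varepsilon A/2}$ is bounded (indeed compact, by \hypref{H2}{H2)}). Each $Be^{sA}$ is compact by \hypref{H2}{H2)}, and $s\mapsto e^{(t-s)(A+B)}$ is also norm-continuous on $[\varepsilon,t)$. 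The endpoint $s=t$ needs care: there we use the factorization $Be^{sA}=Be^{\varepsilon A/2}e^{(s-\varepsilon/2)A}$ and strong continuity. The cleanest route is to view the integral as a limit of Riemann sums. Each sum $\sum e^{(t-s_i)(A+B)}Be^{s_iA}\Delta s_i$ is compact, since each term is a bounded operator times a compact one. To get convergence in operator norm, we write
\[
e^{(t-s)(A+B)}Be^{sA}=e^{(t-s)(A+B)}K\,e^{(s-\varepsilon/2)A},\qquad K:=Be^{\varepsilon A/2}\ \text{compact}.
\]
A product $S(\cdot)\,K$ with $S(\cdot)$ strongly continuous and $K$ compact is norm-continuous, because strong convergence is uniform on the relatively compact set $K(\text{ball})$. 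Combined with the norm continuity of $e^{(s-\varepsilon/2)A}$, this gives norm continuity of the integrand on $[\varepsilon,t]$. Hence $\int_\varepsilon^t$ is a norm limit of compact operators and is therefore compact. So $V(t)$ is a norm limit of compact operators and is itself compact, which completes the argument.
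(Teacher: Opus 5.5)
Your argument is correct and follows the paper's overall plan. You show that $\Lambda_t=e^{t(A+B)}-e^{tA}$ is compact for every $t>0$, use invariance of the essential norm, and split off $\int_0^\varepsilon$, whose norm is $O(\varepsilon^{1-\theta})$. Where you differ is in proving that the truncated integral $\Lambda_{t,\varepsilon}=\int_\varepsilon^t e^{(t-s)(A+B)}Be^{sA}\,ds$ is compact.

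The paper works at the level of vectors. It gets norm continuity of $s\mapsto Be^{sA}$ from the interpolation bound $\|B(e^{rA}-e^{sA})\|_{\mathcal L}\le c\|A(e^{rA}-e^{sA})\|_{\mathcal L}^\theta\|e^{rA}-e^{sA}\|_{\mathcal L}^{1-\theta}$, which needs norm continuity of $s\mapsto Ae^{sA}$. An $\eta$-net argument then shows that $\bigcup_{s\in[\varepsilon,t]}Be^{sA}(\mathbb B_X)$ is relatively compact. This set is mapped through the jointly continuous map $(s,y)\mapsto e^{(t-s)(A+B)}y$ to a compact set $\mathbb M_{[\varepsilon,t]}$. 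Finally, Mazur's theorem gives $\Lambda_{t,\varepsilon}(\mathbb B_X)\subset(t-\varepsilon)\overline{\operatorname{co}}(\mathbb M_{[\varepsilon,t]})$.

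You work at the level of operators. You factor the integrand as $e^{(t-s)(A+B)}K\,e^{(s-\varepsilon/2)A}$ with the single compact operator $K=Be^{\varepsilon A/2}$. Since a locally bounded, strongly continuous family converges uniformly on $K(\mathbb B_X)$, the whole integrand is operator-norm continuous on the closed interval $[\varepsilon,t]$. This includes the endpoint $s=t$ and uses only strong continuity of the perturbed semigroup. Hence $\Lambda_{t,\varepsilon}$ is a norm limit of Riemann sums of compact operators.

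Both proofs rest on the same mechanism, namely uniform convergence on compact sets. Your version is shorter and avoids Mazur's theorem and the net construction. Its norm-continuity step needs only norm continuity of $e^{sA}$ for $s>0$ and boundedness of the one operator $Be^{\varepsilon A/2}$, not norm continuity of $Ae^{sA}$. For example, it would make the separate Lipschitz estimate for $Ae^{sA}$ in the Crandall--Pazy corollary unnecessary. There, the Crandall--Pazy bound would then be used only for integrability of $\|Be^{sA}\|_{\mathcal L}$ near $s=0$. The paper's formulation, in turn, produces an explicit compact set containing $\Lambda_{t,\varepsilon}(\mathbb B_X)$. It also never requires the integrand to be norm continuous as an $\mathcal L(X)$-valued map.
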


    \begin{proof}
First note that, since $A$ generates an analytic $C_0$-semigroup, there exist constants
$M_0\geq1$ and $\omega_0\geq 0 $ such that
\begin{align}
    \|e^{sA}\|_{\mathcal L}
    &\leq M_0 e^{\omega_0 s},
    \qquad s\geq 0.
    \label{C-0-A}
\end{align}
Therefore, $A-\omega_0I$ generates a bounded analytic semigroup. Using the result for bounded analytic semigroups in \cite[Theorem II.4.6]{EnNa},  there exists a constant $C_0>0$ such that
$$
\|(A-\omega_0I)e^{s(A-\omega_0I)}\|_{\mathcal L}
\leq \frac{C_0}{s},
\qquad s>0.
$$ Hence, there exists $M_1:= \max\{C_0,\omega_0M_0\}$ such that 
\begin{align}
\|Ae^{sA}\|_{\mathcal L}
&\leq
\|(A-\omega_0I)e^{sA}\|_{\mathcal L}
+
\omega_0\|e^{sA}\|_{\mathcal L}
\nonumber\\
&=
e^{\omega_0s}
\|(A-\omega_0I)e^{s(A-\omega_0I)}\|_{\mathcal L}
+
\omega_0\|e^{sA}\|_{\mathcal L}
\nonumber\\
&\leq
e^{\omega_0s}
\left(
\frac{C}{s}+\omega_0M_0
\right)
\leq
M_1e^{\omega_0s}
\left(1+\frac{1}{s}\right),
\qquad s>0.
\label{Analytic-A}
\end{align} Therefore, by hypothesis
\hypref{H1}{H1)},
\begin{align*}
\|Be^{sA}x\|
&\leq
c\|Ae^{sA}x\|^\theta
\|e^{sA}x\|^{1-\theta}\leq
cM_1^\theta M_0^{1-\theta}
e^{\omega_0s}
\left(1+\frac1s\right)^\theta
\|x\|.
\end{align*}
Hence, for every $t_0>0$,
\begin{align*}
\int_0^{t_0}\|Be^{sA}x\|\;\mathrm{d}s
&\leq
cM_1^\theta M_0^{1-\theta}
e^{\omega_0t_0}(1+t_0)^\theta
\int_0^{t_0}s^{-\theta}\;\mathrm{d}s\,\|x\|\\
&=
\frac{cM_1^\theta M_0^{1-\theta}}
{1-\theta}
e^{\omega_0t_0}(1+t_0)^\theta
t_0^{1-\theta}\|x\|.
\end{align*}
Since $0\le \theta<1$, the coefficient on the right-hand side converges to
zero as $t_0\to0^+$. Thus, choosing $t_0>0$ sufficiently small, there
exists $q<1$ such that
$$
\int_0^{t_0}\|Be^{sA}x\|\;\mathrm{d}s
\leq q\|x\|,
\qquad x\in D(A).
$$
Hence, by \cite[Corollary III.3.16]{EnNa}, the operator $A+B$ with domain $D(A+B)=D(A)$ generates a strongly continuous semigroup, and by the variation of constants formula applied to \eqref{closedSys}, for every $x\in X$ and every $t\geq 0$,
we have $$e^{t(A+B)}x=e^{tA}x+\Lambda_t x,$$
where $$\Lambda_t x:=\int_0^t e^{(t-s)(A+B)}Be^{sA}x\,ds .$$
Since $e^{t(A+B)}-e^{tA}=\Lambda_t ,$ in order to prove that $\left(e^{t(A+B)}\right)_{t\geq0}$ is
quasi-compact, it is enough to prove that $\Lambda_t$ is compact for every
$t>0$. Indeed, since $(e^{tA})_{t\geq0}$ is quasi-compact, thanks to \cite[Proposition V.3.5]{EnNa} there exists $t_0>0$
such that $\|e^{t_0A}\|_{\mathrm{ess}}<1.$ 
Because compact perturbations do not change the essential norm, we obtain
$$
\|e^{t_0(A+B)}\|_{\mathrm{ess}}
=
\|e^{t_0A}+\Lambda_{t_0}\|_{\mathrm{ess}}
=
\|e^{t_0A}\|_{\mathrm{ess}}
<1,
$$
and thus $(e^{t(A+B)})_{t\geq0}$ is quasi-compact. In the following steps, we prove that $\Lambda_t$ is compact for every $t>0.$

\hypertarget{stp1}{\textbf{Step 1.}} 
Fix $t>0$. For $\varepsilon \in(0,t)$, define $$\Lambda_{t,\varepsilon }x
:=
\int_{\varepsilon }^{t} e^{(t-s)(A+B)}Be^{sA}x\,ds \,,\qquad x\in X\, .$$
We first prove that $\Lambda_{t,\varepsilon }$ is compact.  Note that the map  $s\mapsto Be^{sA}$ is norm-continuous from $[\varepsilon ,t]$ into $\mathcal{L}(X)$. Indeed, since $A$ generates an analytic semigroup, for every $\varepsilon >0$, the maps $s\mapsto e^{sA}$ and $s\mapsto Ae^{sA}$ are norm-continuous  for any $s\in [\varepsilon,t]$~\cite[\S II.2.c]{EnNa}. Hence, for $r,s\in[\varepsilon ,t]$ and $x\in X$, by \hypref{H1}{H1)}, we get
$$
\left\|B\left(e^{rA}-e^{sA}\right)x\right\|
\leq
c\left\|A\left(e^{rA}-e^{sA}\right)x\right\|^{\theta}
\left\|\left(e^{rA}-e^{sA}\right)x\right\|^{1-\theta}.
$$
Therefore,
\begin{equation}
    \label{Be^{sA}}
   \left\|B\left(e^{rA}-e^{sA}\right)\right\|_{\mathcal L}
\leq
c
\left\|A\left(e^{rA}-e^{sA}\right)\right\|_{\mathcal L}^{\theta}
\left\|e^{rA}-e^{sA}\right\|_{\mathcal L}^{1-\theta}
\rightarrow0\,\,\text{as}\,\, r\rightarrow s. 
\end{equation}

\hypertarget{stp2}{\textbf{Step 2.}} Let
$$\mathbb U_{[\varepsilon ,t]}
:=
\bigcup_{s\in[\varepsilon ,t]}Be^{sA}(\mathbb B_X),
$$
where
$
\mathbb B_X:=\{x\in X:\|x\|\leq1\}.
$
We claim that $\mathbb U_{[\varepsilon ,t]}$ is relatively compact in~$X$. 

Since $s\mapsto Be^{sA}$ is norm-continuous on the compact interval
$[\varepsilon ,t]$, the set
$$
\mathbb T_{[\varepsilon ,t]}:=\{Be^{sA}:s\in[\varepsilon ,t]\}
$$
is compact in $\mathcal L(X)$. Moreover, by hypothesis \hypref{H2}{H2)}, $Be^{sA}$ is compact
for every $s>0$. Hence $\mathbb T_{[\varepsilon ,t]}\subset \mathcal K(X).$

Let $\eta>0$. By compactness of $\mathbb T_{[\varepsilon ,t]}$ in
$\mathcal L(X)$, there exist $s_1,\ldots,s_N\in[\varepsilon ,t]$ such that, setting $K_j:=Be^{s_jA},\, j=1,\ldots,N,$
we have
$$
\mathbb T_{[\varepsilon ,t]}
\subset
\bigcup_{j=1}^N
\mathbb B_{\mathcal L(X)}(K_j,\eta/2).
$$
That is, for every $s\in[\varepsilon ,t]$, there exists
$j\in\{1,\ldots,N\}$ such that
\begin{equation}\label{K-compact}
    \|Be^{sA}-K_j\|_{\mathcal L}<\eta/2.
\end{equation}

Since each $K_j=Be^{s_jA}$ is compact, each set $K_j(\mathbb B_X)$ is relatively compact in~$X$. Therefore, each $K_j(\mathbb B_X)$ is totally bounded. Hence, for each $j=1,\ldots,N$, there exists a finite set
$L_j\subset X$ such that $L_j$ is an $\eta/2$-net for $K_j(\mathbb B_X)$; that
is, for every $x\in\mathbb B_X$, there exists $z\in L_j$ such that
\begin{equation}\label{zineta/2net}
    \|K_jx-z\|<\eta/2.
\end{equation}

Now define the finite set $L:=\bigcup_{j=1}^N L_j.$ We observe that $L$ is an $\eta$-net for $\mathbb U_{[\varepsilon ,t]}$. Indeed, let $y\in\mathbb U_{[\varepsilon ,t]}$. Then there exist
$s\in[\varepsilon ,t]$ and $x\in\mathbb B_X$ such that $y=Be^{sA}x.$
Choose $j\in\{1,\ldots,N\}$ such that \eqref{K-compact} holds. Then \eqref{zineta/2net} ensures that there exists~$z\in L_j\subset L$ such that
$$
\begin{aligned}
\|y-z\|
&=
\|Be^{sA}x-z\| \leq
\|Be^{sA}x-K_jx\|+\|K_jx-z\|  \\
&\leq
\|Be^{sA}-K_j\|_{\mathcal L}\,\|x\|+\|K_jx-z\| <\eta. 
\end{aligned}
$$
Thus, $L$ is a finite $\eta$-net for $\mathbb U_{[\varepsilon ,t]}$. Since $\eta>0$ is arbitrary, $\mathbb U_{[\varepsilon ,t]}$ is totally bounded in
$X$. Since $X$ is a Banach space, total boundedness of
$\mathbb U_{[\varepsilon ,t]}$ implies that $\mathbb U_{[\varepsilon ,t]}$ is relatively compact in $X$ \cite[Theorem 3.17.13]{NaylorSell1982}.

\vspace{2mm}

\hypertarget{stp3}{\textbf{Step 3.}} We now show that the map
$
(\tau,y)\mapsto e^{\tau(A+B)}y
$
is jointly continuous from $[0,t]\times X$ into $X$. Let
$\tau_n\to \tau$ in $[0,t]$ and let $y_n\to y$ in $X$. Then
$$
\begin{aligned}
\|e^{\tau_n(A+B)}y_n-e^{\tau(A+B)}y\|
&\leq
\|e^{\tau_n(A+B)}(y_n-y)\|
+
\|e^{\tau_n(A+B)}y-e^{\tau(A+B)}y\| \\
&\leq
\|e^{\tau_n(A+B)}\|\,\|y_n-y\|
+
\|e^{\tau_n(A+B)}y-e^{\tau(A+B)}y\|.
\end{aligned}
$$
Since $(e^{\tau(A+B)})_{\tau\geq0}$ is a $C_0$-semigroup, by
\cite[Theorem 1.2.2]{Pazy}, there exist $\tilde M\geq1$ and
$\tilde\omega\geq 0$ such that $\|e^{\tau(A+B)}\|\leq\tilde Me^{\tilde \omega \tau},
    \, \tau\geq0.$
In particular, for any $\tau_n\in[0,t]$, $\|e^{\tau_n(A+B)}\|
    \leq \tilde Me^{\tilde \omega t}.$ Therefore $\|e^{\tau_n(A+B)}\|\|y_n-y\|\to0.$
Moreover, by the strong continuity of the semigroup, $ \|e^{\tau_n(A+B)}y-e^{\tau(A+B)}y\|\to0.$
Consequently,
\[
    \|e^{\tau_n(A+B)}y_n-e^{\tau(A+B)}y\|\to0.
\]
Hence, the map
$
(\tau,y)\mapsto e^{\tau(A+B)}y
$
is jointly continuous from $[0,t]\times X$ into $X$.

Recall from Step \hyperlink{stp2}{{\textbf{2}}} that
$\overline{\mathbb U}_{[\varepsilon ,t]}$ is compact in $X$. Therefore,
$[\varepsilon ,t]\times \overline{\mathbb U}_{[\varepsilon ,t]}$
is compact. Since the map
$
(s,y)\mapsto e^{(t-s)(A+B)}y
$
is continuous from
$[\varepsilon ,t]\times \overline{\mathbb U}_{[\varepsilon ,t]}$
into $X$, its image
$$
\mathbb M_{[\varepsilon ,t]}
:=
\left\{
e^{(t-s)(A+B)}y\,:\,s\in[\varepsilon ,t],\ y\in\overline{\mathbb U}_{[\varepsilon ,t]}
\right\}
$$
is compact in $X$. 

\vspace{2mm}
\hypertarget{stp4}{\textbf{Step 4.}} We now prove that  $\Lambda_{t,\varepsilon}(\mathbb B_X)$ is relatively compact. Let $x\in\mathbb B_X$. For every $s\in[\varepsilon ,t]$, we have $Be^{sA}x\in\mathbb U_{[\varepsilon ,t]}.$ Therefore, $
e^{(t-s)(A+B)}Be^{sA}x\in\mathbb M_{[\varepsilon ,t]}
$. 
Hence, the integrand in
$$
\Lambda_{t,\varepsilon }x
=
\int_\varepsilon ^t e^{(t-s)(A+B)}Be^{sA}x\,ds
$$
takes values in the compact set $\mathbb M_{[\varepsilon ,t]}$. Observe that  $\Lambda_{t,\varepsilon }x$ is the norm limit of Riemann sums of the form
$$
\sum_{j=1}^n \Delta s_j\,e^{(t-s_j)(A+B)}Be^{s_jA}x\,.
$$
Dividing by $t-\varepsilon $, we get a convex combination of elements of $\mathbb M_{[\varepsilon ,t]}$.
Therefore,
$$
\frac{1}{t-\varepsilon }\Lambda_{t,\varepsilon }x
\in
\overline{\operatorname{co}}\left(\mathbb M_{[\varepsilon ,t]}\right).
$$
Since $\mathbb M_{[\varepsilon ,t]}$ is  compact, its closed convex hull
is compact in the Banach space~$X$ by Mazur's Theorem \cite[Theorem IV.4.8]{Conway1990}. Hence,
$$
\Lambda_{t,\varepsilon }(\mathbb B_X)
\subset
(t-\varepsilon )\;\overline{\operatorname{co}}\left(\mathbb M_{[\varepsilon ,t]}\right)
$$
is relatively compact in $X$. Thus, $\Lambda_{t,\varepsilon }$ is compact (see \cite[Definition 3.1]{Conway1990}).

\vspace{2mm}
\hypertarget{stp5}{\textbf{Step 5.}}
Since $B$ is a $\theta$-subordinate perturbation of $A$, it follows from
\eqref{rel-theta-Abound-relation} and Definition~\ref{Abound} that $B$ is
$A$-bounded with $A$-bound zero.
Therefore, by the perturbation theorem for analytic semigroups \cite[Theorem 3.2.1]{Pazy}, $A+B$
also generates an analytic $C_0$-semigroup. Hence, 
there exist constants
$M_2\geq 1$ and $\omega_1\geq0$ such that
\begin{equation}\label{C-0-A+B}
    \|e^{(t-s)(A+B)}\|_{\mathcal L}
    \leq M_2e^{\omega_1(t-s)},
    \qquad 0\leq s\leq t.
\end{equation}

Using \hypref{H1}{H1)}, \eqref{C-0-A}, \eqref{Analytic-A} and \eqref{C-0-A+B}, we obtain, for any $0<s\leq t$,
\begin{align}
    \|e^{(t-s)(A+B)}Be^{sA}x\|
    &\leq
    \|e^{(t-s)(A+B)}\|_{\mathcal L}\,
    \|Be^{sA}\|_{\mathcal L}\,\|x\| \nonumber\\
    &\leq
    cM_2e^{\omega_1(t-s)} 
    \left[
        M_1e^{\omega_0s}\left(1+\frac1s\right)
    \right]^\theta
    \left[
        M_0e^{\omega_0s}
    \right]^{1-\theta}
    \|x\| \nonumber\\
    &=
    cM_2 M_1^\theta M_0^{1-\theta}
    e^{\omega_1(t-s)+\omega_0s}
    \left(1+\frac1s\right)^\theta
    \|x\| \nonumber\\
    &\leq C(t)s^{-\theta}\|x\|\,,\label{B-T-estimate-2}
\end{align}
where $ C(t)
    :=
    cM_0^{1-\theta}M_1^\theta M_2
    e^{\max\{\omega_0,\omega_1\} t}(1+t)^\theta.$ Consequently, for $\|x\|\leq 1$,
\begin{align}
    \|(\Lambda_t-\Lambda_{t,\varepsilon })x\|
    &=
    \left\|
    \int_0^\varepsilon  e^{(t-s)(A+B)}Be^{sA}x\,ds
    \right\| \nonumber\\
    &\leq
    \int_0^\varepsilon 
    \|e^{(t-s)(A+B)}Be^{sA}\|_{\mathcal L}
    \,\|x\|\,ds \nonumber\\
    &\leq
    C(t)\int_0^\varepsilon s^{-\theta}\,ds 
    =
    \frac{C(t)}{1-\theta}\,\varepsilon^{1-\theta}.\label{limitpasLamb}
\end{align}
Since $\theta<1$, it follows that $\varepsilon ^{1-\theta}\to 0$  as $\varepsilon \downarrow 0.$
Therefore,
\[
    \|\Lambda_t-\Lambda_{t,\varepsilon }\|_{\mathcal L}
    \to 0
    \qquad\text{as}\qquad
    \varepsilon \downarrow 0.
\] 

Since each $\Lambda_{t,\varepsilon }$ is compact and the space of compact
operators is closed in $\mathcal L(X)$ \cite[Theorem 5.24.8]{NaylorSell1982}, it follows that $\Lambda_t$ is compact. 
\end{proof}
The preservation of quasi-compactness established above in fact highlights a stronger invariance property. Indeed, the proof of Lemma \ref{quasi-compact-preservatn} shows that, for every $t>0$, $e^{t(A+B)}-e^{tA}=\Lambda_t\in\mathcal K(X).$ Hence \(e^{t(A+B)}\) and \(e^{tA}\) determine the same element of the Calkin algebra $\mathfrak C(X):=\mathcal L(X)/\mathcal K(X).$ In other words, if $\pi:\mathcal L(X)\longrightarrow \mathcal L(X)/\mathcal K(X)$ denotes the canonical quotient map onto $\mathfrak C(X)$, then $$\pi\,\left(e^{t(A+B)}\right)= \pi\,\left(e^{tA}\right),\qquad t\geq0.$$ Thus, for every $t>0$, it follows that $\|e^{t(A+B)}\|_{\text{ess}}=\|e^{tA}\|_{\text{ess}},$ which implies $$\omega_{\mathrm{ess}}(A+B) =\inf_{t>0}\frac1{t}\log\|e^{t(A+B)}\|_{\mathrm{ess}}=\inf_{t>0}\frac1{t}\log\|e^{tA}\|_{\mathrm{ess}}=\omega_{\mathrm{ess}}(A).$$ 
Therefore, under hypotheses
\hypref{H1}{H1)} and
\hypref{H2}{H2)}, the essential growth bound and the essential norm are preserved for every $t>0$.
\begin{rk}
  Note that the proof of Lemma \ref{quasi-compact-preservatn} suggests that
  the analyticity assumption can be relaxed to the class of Crandall-Pazy
  semigroups, at the cost of allowing for a smaller class of
  $\theta$-subordinate perturbations $B$. Indeed, the analyticity assumption is
  mainly used in the estimate \eqref{Analytic-A} to obtain
  \eqref{B-T-estimate-2}, and so to conclude the convergence in
  \eqref{limitpasLamb}. 
\end{rk}

Recall the definition of the class of \emph{Crandall-Pazy semigroups with parameter
$\alpha\in(0,1]$} (see \cite{Crandall-Pazy}, \cite[\S 1]{Wakaiki}).

\begin{Def}
Let $(T(t))_{t\geq 0}$ be a $C_0$-semigroup on a Banach space $X$ with generator $A$. The semigroup $(T(t))_{t\geq 0}$ is said to be in the \emph{Crandall-Pazy class} with parameter
$\alpha\in(0,1]$ if $(T(t))_{t\geq0}$ is immediately differentiable and
\begin{equation}\label{eqCP}
\|A T(t)\|_{\mathcal{L}} = O\left(\frac{1}{t^{1/\alpha}}\right)
\qquad \text{as } t\to 0^{+}.
\end{equation}
\end{Def}

We now state the corresponding quasi-compactness preservation result.

\begin{Cor}\label{crandall-pazy-Quasi-Compact}
Let $A$ be the generator of a quasi-compact $C_0$-semigroup
$\left(e^{tA}\right)_{t\geq0}$ on a Banach space $X$. Assume that $\left(e^{tA}\right)_{t\geq0}$ belongs
to the Crandall-Pazy class with parameter $\alpha\in(0,1]$. Let
$B:D(B)\to X$ be a linear operator such that $B$ is $\theta$-subordinate to $A$ in the sense of
    Definition \ref{theta-sub}, with $0<\theta<\alpha\leq1$ and 
$Be^{tA}$ is compact for all $t>0.$
Then $\left(e^{t(A+B)}\right)_{t\geq0}$ is quasi-compact.
\end{Cor}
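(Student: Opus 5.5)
We follow the proof of Lemma~\ref{quasi-compact-preservatn} step by step and change only the places where analyticity was used. The starting point is the Crandall--Pazy estimate \eqref{eqCP}. It gives constants $C_0>0$ and $\delta\in(0,1]$ with $\|Ae^{sA}\|_{\mathcal L}\le C_0 s^{-1/\alpha}$ for $0<s\le\delta$. For $s\ge\delta$ we write $Ae^{sA}=e^{(s-\delta)A}Ae^{\delta A}$, which is bounded by $M_0e^{\omega_0 s}C_0\delta^{-1/\alpha}$. Together these yield a global bound
$$\|Ae^{sA}\|_{\mathcal L}\le M_1e^{\omega_0 s}\bigl(1+s^{-1/\alpha}\bigr),\qquad s>0,$$
which replaces \eqref{Analytic-A}. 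Plugging it into \hypref{H1}{H1)} gives
$$\|Be^{sA}\|_{\mathcal L}\le cM_1^\theta M_0^{1-\theta}e^{\omega_0 s}\bigl(1+s^{-1/\alpha}\bigr)^\theta\lesssim s^{-\theta/\alpha}$$
near $0$. Since $\theta<\alpha$, the exponent satisfies $\theta/\alpha<1$, so $s\mapsto\|Be^{sA}\|_{\mathcal L}$ is integrable at $0$. The integral over $[0,t_0]$ is $O(t_0^{1-\theta/\alpha})$, so it is smaller than $q<1$ for small $t_0$. Therefore \cite[Corollary III.3.16]{EnNa} (the Miyadera--Voigt theorem) shows that $A+B$ generates a $C_0$-semigroup and that the variation of constants formula $e^{t(A+B)}=e^{tA}+\Lambda_t$ holds.

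Here we must not use Step~5 of the earlier proof as written. It relied on the analytic perturbation theorem to get exponential bounds for $e^{t(A+B)}$, and that theorem is not available now. We only need the standard bound $\|e^{\tau(A+B)}\|\le\tilde Me^{\tilde\omega\tau}$, which holds for any $C_0$-semigroup and which Step~3 already uses. With it, the estimate \eqref{B-T-estimate-2} becomes $\|e^{(t-s)(A+B)}Be^{sA}\|\le C(t)s^{-\theta/\alpha}$. Consequently
$$\|\Lambda_t-\Lambda_{t,\varepsilon}\|_{\mathcal L}\le \frac{C(t)}{1-\theta/\alpha}\,\varepsilon^{1-\theta/\alpha}\longrightarrow 0\qquad\text{as }\varepsilon\downarrow0 .$$

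Steps~2--4 (compactness of $\Lambda_{t,\varepsilon}$) go through unchanged once Step~1 holds, namely once $s\mapsto Be^{sA}$ is norm-continuous on $[\varepsilon,t]$. Step~1 is the point to check. Immediate differentiability implies that $s\mapsto e^{sA}$ is norm-continuous on $(0,\infty)$ \cite[\S II.4]{EnNa}. We also need norm-continuity of $s\mapsto Ae^{sA}$ on $(0,\infty)$. For this, write $Ae^{rA}-Ae^{sA}=(e^{(r-s/2)A}-e^{(s/2)A})Ae^{(s/2)A}$ with $r$ near $s$; the difference factor tends to $0$ in norm by the previous continuity, and $Ae^{(s/2)A}$ is a fixed bounded operator. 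The interpolation estimate \eqref{Be^{sA}} then gives norm-continuity of $s\mapsto Be^{sA}$.

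Once each $\Lambda_{t,\varepsilon}$ is compact, the norm limit $\Lambda_t$ is compact. Hence $\|e^{t(A+B)}\|_{\mathrm{ess}}=\|e^{tA}\|_{\mathrm{ess}}$ for all $t>0$, and quasi-compactness transfers exactly as in the lemma. The main obstacle is the near-zero singularity estimate, which is exactly where the condition $\theta<\alpha$ enters. The secondary point to watch is that Step~5 must avoid using analyticity of $A+B$.
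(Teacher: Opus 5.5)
Your proposal is correct and follows essentially the same route as the paper. The Crandall--Pazy estimate gives $\|Be^{sA}\|_{\mathcal L}=O(s^{-\theta/\alpha})$, which is integrable near $0$ because $\theta<\alpha$; Miyadera--Voigt then yields generation and the variation of constants formula; Step 1 and Step 5 of Lemma~\ref{quasi-compact-preservatn} are modified and Steps 2--4 are reused unchanged.

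Your small variations are all valid:
\begin{itemize}
\item You extend the bound to all $s\in(0,t]$ via $Ae^{sA}=e^{(s-\delta)A}Ae^{\delta A}$.
\item You use only the generic $C_0$ exponential bound for $e^{\tau(A+B)}$.
\item You prove norm-continuity of $Ae^{sA}$ through the factorization $(e^{(r-s/2)A}-e^{(s/2)A})Ae^{(s/2)A}$ instead of the paper's Lipschitz integral estimate.
\end{itemize}
The first two make explicit points that the paper passes over quickly: \eqref{eqCP} only controls small $s$, and \eqref{C-0-A+B} needs no analyticity of $A+B$.
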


\begin{proof}
Since $\left(e^{tA}\right)_{t\geq0}$ belongs to the Crandall--Pazy class
with parameter $\alpha\in(0,1]$, there exist $\delta>0$ and
$C_\delta>0$ such that
\[
\|Ae^{sA}\|_{\mathcal L}
\leq
C_\delta s^{-1/\alpha},
\qquad 0<s\leq\delta.
\]
Moreover, since $\left(e^{tA}\right)_{t\geq0}$ is a $C_0$-semigroup, it is exponentially bounded (see~\cite[Theorem~1.2.2]{Pazy}). Hence, for every $\delta>0$, there exists $M_\delta>0$ such that
$$
\|e^{sA}\|_{\mathcal L}\leq M_\delta,
\qquad 0\leq s\leq\delta.
$$
Therefore, by the $\theta$-subordination of $B$ to $A$, for every
$x\in X$ and $0<s\leq\delta$, we have
$$
\|Be^{sA}x\|
\leq
c\|Ae^{sA}x\|^\theta
\|e^{sA}x\|^{1-\theta} 
\leq
cC_\delta^\theta M_\delta^{1-\theta}
s^{-\theta/\alpha}\|x\|.
$$

Since $\theta<\alpha$, we have $\theta/\alpha<1$, and thus, for every $t_0\in(0,\delta]$,
$$
\int_0^{t_0}\|Be^{sA}x\|\,ds
\leq
\frac{cC_\delta^\theta M_\delta^{1-\theta}}
{1-\theta/\alpha}
t_0^{\,1-\theta/\alpha}\|x\|.
$$
The coefficient on the right-hand side tends to zero as
$t_0\to0^+$. Thus, we can choose $t_0>0$ sufficiently small so that there exists
$q<1$ such that
$$
\int_0^{t_0}\|Be^{sA}x\|\,ds
\leq q\|x\|,
\qquad x\in D(A).
$$It then follows from \cite[Corollary~III.3.16]{EnNa} that $A+B,$ with domain $D(A+B)=D(A),$ generates a strongly continuous semigroup
$\left(e^{t(A+B)}\right)_{t\geq0}$, and we have
$$
e^{t(A+B)}x
=
e^{tA}x+
\int_0^t
e^{(t-s)(A+B)}Be^{sA}x\,ds,
\qquad x\in X,\quad t\geq0.
$$
The rest of the proof follows the same argument as in Lemma \ref{quasi-compact-preservatn}, except for Steps~\hyperlink{stp1}{{\textbf{1}}} and \hyperlink{stp5}{{\textbf{5}}}. More precisely, Step \hyperlink{stp1}{{\textbf{1}}} is modified to prove the norm-continuity of the map $s\mapsto Be^{sA}$ on $[\varepsilon ,t]$ as follows. Fix $\varepsilon\in (0,t)$. Note that, since $\left(e^{tA}\right)_{t\geq0}$ is immediately differentiable, the map $s\mapsto e^{sA}$ is norm-continuous on~$[\varepsilon ,t]$ (see~\cite[\S II.2.c]{EnNa}). We now prove that the map $s\mapsto Ae^{sA}$ is norm-continuous on $[\varepsilon ,t]$. Let $r,s\in[\varepsilon ,t]$ and assume,
without loss of generality, that $r>s>0$. Then
$$
Ae^{rA}-Ae^{sA}
=
Ae^{\frac{s}{2}A}
\left(e^{(r-s)A}-I\right)e^{\frac{s}{2}A}.
$$
Hence, we have
$$
\begin{aligned}
\|Ae^{rA}-Ae^{sA}\|_{\mathcal L}
&\leq
\left\|Ae^{\frac{s}{2}A}\right\|_{\mathcal L}
\left\|
\left(e^{(r-s)A}-I\right)e^{\frac{s}{2}A}
\right\|_{\mathcal L}  \\
&\leq
\left\|Ae^{\frac{s}{2}A}\right\|_{\mathcal L}
\int_0^{r-s}
\|e^{\tau A}\|_{\mathcal L}
\left\|Ae^{\frac{s}{2}A}\right\|_{\mathcal L}
\,d\tau .
\end{aligned}
$$
By the Crandall--Pazy estimate \eqref{eqCP}, the family
$
\left\{Ae^{\frac{s}{2}A}:s\in[\varepsilon ,t]\right\}
$
is bounded in $\mathcal L(X)$. Moreover, using \eqref{C-0-A}, there exists a
constant $C_{\varepsilon ,t}>0$ such that
$$
\|Ae^{rA}-Ae^{sA}\|_{\mathcal L}
\leq
C_{\varepsilon ,t}|r-s|.
$$
Consequently,
$$
\|Ae^{rA}-Ae^{sA}\|_{\mathcal L}\to0
\qquad\text{as } r\to s.
$$
Hence, $s\mapsto Ae^{sA}$ is norm-continuous on $[\varepsilon ,t]$. Then, the norm continuity of the map \(s\mapsto Be^{sA}\) on \([\varepsilon,t]\) follows as in \eqref{Be^{sA}} of Step \hyperlink{stp1}{{\textbf{1}}}.

It remains to prove Step \hyperlink{stp5}{{\textbf{5}}}. 
Using the $\theta$-subordination of $B$ to $A$, together with~\eqref{C-0-A},~\eqref{C-0-A+B} and~\eqref{eqCP}, we obtain, for every $x\in X$ and $0<s\leq t$,
$$
\|e^{(t-s)(A+B)}Be^{sA}x\|
\leq
C(t)s^{-\theta/\alpha}\|x\|.
$$
Since $\theta<\alpha$, we have $\theta/\alpha<1$, and hence
$s^{-\theta/\alpha}$ is integrable near $s=0$. Therefore
$$
\|\Lambda_t-\Lambda_{t,\varepsilon }\|_{\mathcal L}
\leq
C(t)\int_0^\varepsilon  s^{-\theta/\alpha}\,ds
\to0
\qquad\text{as }\varepsilon \downarrow0.
$$
The remaining part of the proof follows exactly the same arguments as in Lemma~\ref{quasi-compact-preservatn}. Hence, $\Lambda_t$ is compact for every $t>0$, and therefore
$\left(e^{t(A+B)}\right)_{t\geq0}$ is quasi-compact.
\end{proof}
In fact, for bounded perturbations $B$, the Crandall-Pazy estimate~\eqref{eqCP} can be removed while retaining immediate differentiability of the semigroup generated by $A$. More precisely, it follows directly from the proof of Lemma \ref{quasi-compact-preservatn} that the preservation of quasi-compactness remains valid for immediately differentiable $C_0$-semigroups, provided the bounded perturbation satisfies
assumption \hypref{H2}{H2)}.
\begin{Cor}\label{Cor2}
Let $A$ be the generator of an immediately differentiable quasi-compact $C_0$-semigroup
$\left(e^{tA}\right)_{t\geq0}$ on a Banach space $X$. Let $B\in\mathcal L(X)$ be a bounded linear operator such that $Be^{tA}$ is compact for all $t>0$.
Then $\left(e^{t(A+B)}\right)_{t\geq0}$ is quasi-compact.
\end{Cor}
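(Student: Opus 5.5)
We follow the structure of the proof of Lemma~\ref{quasi-compact-preservatn}, checking that analyticity is used only in Steps~\hyperlink{stp1}{\textbf{1}} and~\hyperlink{stp5}{\textbf{5}}, and that boundedness of $B$ makes those steps easy.

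\emph{Generation and decomposition.} Since $B\in\mathcal L(X)$, the bounded perturbation theorem \cite[Theorem III.1.3]{EnNa} shows that $A+B$ with domain $D(A)$ generates a $C_0$-semigroup. The variation of constants formula then gives
\[
e^{t(A+B)}x=e^{tA}x+\Lambda_tx,\qquad \Lambda_tx:=\int_0^te^{(t-s)(A+B)}Be^{sA}x\,ds,\qquad x\in X,\ t\ge0.
\]
If $\Lambda_t$ is compact for every $t>0$, then $\|e^{t(A+B)}\|_{\mathrm{ess}}=\|e^{tA}\|_{\mathrm{ess}}$, which is $<1$ for some $t_0$ by quasi-compactness. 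This yields the claim, exactly as in the lemma.

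\emph{Step~\hyperlink{stp1}{\textbf{1}}: norm continuity of the integrand.} Because $(e^{tA})_{t\ge0}$ is immediately differentiable, it is immediately norm continuous \cite[\S II.2.c]{EnNa}. Hence $s\mapsto e^{sA}$ is norm continuous on $[\varepsilon,t]$. Since $B$ is bounded,
\[
\|Be^{rA}-Be^{sA}\|_{\mathcal L}\le\|B\|_{\mathcal L}\,\|e^{rA}-e^{sA}\|_{\mathcal L}\to0 \quad\text{as } r\to s,
\]
so no estimate on $Ae^{sA}$ is required.

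\emph{Steps~\hyperlink{stp2}{\textbf{2}}--\hyperlink{stp4}{\textbf{4}}: compactness away from $s=0$.} These steps carry over verbatim. Together with the compactness hypothesis on $Be^{sA}$, norm continuity makes $\{Be^{sA}:s\in[\varepsilon,t]\}$ a compact subset of $\mathcal K(X)$. Therefore $\bigcup_{s\in[\varepsilon,t]}Be^{sA}(\mathbb B_X)$ is totally bounded. Joint continuity of $(\tau,y)\mapsto e^{\tau(A+B)}y$ gives a compact set $\mathbb M_{[\varepsilon,t]}$. Mazur's theorem then shows that $\Lambda_{t,\varepsilon}$ is compact.

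\emph{Step~\hyperlink{stp5}{\textbf{5}}: the approximation near $s=0$.} This step becomes trivial. Let $\|e^{\tau(A+B)}\|\le \tilde M e^{\tilde\omega\tau}$ and $\|e^{sA}\|\le M_0e^{\omega_0 s}$. Then
\[
\|\Lambda_t-\Lambda_{t,\varepsilon}\|_{\mathcal L}\le \varepsilon\,\tilde M M_0\|B\|_{\mathcal L}\,e^{(\tilde\omega+\omega_0)t}\to0 \quad\text{as } \varepsilon\downarrow0.
\]
The integrand is bounded, so there is no singularity to control. Since $\mathcal K(X)$ is closed in $\mathcal L(X)$, $\Lambda_t$ is compact.

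\emph{Main obstacle.} The only delicate point is Step~\hyperlink{stp1}{\textbf{1}}: we need norm continuity of $s\mapsto e^{sA}$ for $s>0$ without analyticity. This is precisely what immediate differentiability supplies, because $\|e^{rA}-e^{sA}\|\le\int_s^r\|Ae^{\tau A}\|\,d\tau$ and $\tau\mapsto Ae^{\tau A}=Ae^{\frac{s}{2}A}e^{(\tau-\frac s2)A}$ is bounded on $[s,r]$.
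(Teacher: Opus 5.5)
Your proposal is correct and follows the paper's proof essentially step by step. Step 1 obtains norm continuity of $s\mapsto Be^{sA}$ on $[\varepsilon,t]$ from immediate norm continuity and $\|B\|_{\mathcal L}<\infty$, Steps 2--4 carry over unchanged, and Step 5 reduces to a bounded integrand. Your additions are minor clarifications: you invoke the bounded perturbation theorem directly for generation instead of the Miyadera--Voigt route used in the lemma, and you write out the explicit $O(\varepsilon)$ bound and the justification of norm continuity, which the paper only cites.
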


\begin{proof}
Note that, since the semigroup $\left(e^{tA}\right)_{t\geq0}$ is immediately differentiable, the map $s\mapsto e^{sA}$ is norm continuous on $[\varepsilon ,t]$ (see \cite[\S II.2.c]{EnNa}). This, together with the boundedness of $B$, implies
$$
\|Be^{rA}-Be^{sA}\|_{\mathcal L}
\leq
\|B\|_{\mathcal L}\,\|e^{rA}-e^{sA}\|_{\mathcal L}
\to 0
\quad \text{as } r\to s,\ \forall\, r,s\in[\varepsilon ,t].
$$
Thus, the map $s\mapsto Be^{sA}$ is norm continuous on $[\varepsilon ,t]$.

We now replace Step \hyperlink{stp5}{{\textbf{5}}} of the proof of Lemma \ref{quasi-compact-preservatn} by the following argument. Exploiting the boundedness of $B$ and estimates \eqref{C-0-A} and \eqref{C-0-A+B}, we obtain
$$
\|\Lambda_t-\Lambda_{t,\varepsilon }\|_{\mathcal L}
\leq
\int_0^\varepsilon 
\left\|e^{(t-s)(A+B)}Be^{sA}\right\|_{\mathcal L}\,ds
\to 0
\qquad \text{as } \varepsilon \downarrow0.
$$
The rest of the proof follows exactly as in Lemma \ref{quasi-compact-preservatn}.
\end{proof}

\section{Uniform stabilization under $\theta$-subordinate perturbations of generators}\label{sec3}
We now state our main stabilization result on a general Banach space under certain conditions on the perturbation $B$ and a regularity assumption on the semigroup generated by $A$. We exploit the fact that quasi-compactness of the semigroup implies uniform exponential stability of the induced semigroup in the Calkin algebra $\mathfrak{C}(X)$ equipped with quotient norm $\|\cdot\|_{\text{ess}}$. We show that quasi-compactness, combined with
strong stability of the perturbed semigroup, implies exponential stability of the semigroup generated by $A+B$.
\begin{thm}\label{main}
    Let $A$ be the generator of an analytic semigroup $\left(e^{tA}\right)_{t\geq0}$ of bounded linear operators on the Banach space $X$. Consider a linear operator $B:D(B)\to X$ and assume that the following hypotheses are true
    \begin{itemize}
        
        \item[\emph{\textbf{H1)}}]$B$ is $\theta$-subordinate to $A$ in the sense of Definition \ref{theta-sub}.

        \item[\emph{\textbf{H2)}}]$\forall\,t>0$, $Be^{tA}$ is compact.  

        \item[\hypertarget{H3}{\emph{\textbf{H3)}}}]$\left(e^{tA}\right)_{t\geq0}$ is uniformly exponentially stable, that is, $$\exists\, M_A \geq1,\,\,\omega_A>0\quad\text{such that}\quad \|e^{tA}\|_{\mathcal{L}}\leq M_Ae^{-\omega_A t},\,\,\forall\, t>0.$$\item[\hypertarget{H4}{\emph{\textbf{H4)}}}] \label{H4} $\left(e^{t(A+B)}\right)_{t\geq0}$ is strongly stable, that is,

        $$\lim_{t\to +\infty}\|e^{t(A+B)}x\|=0, \quad\forall\,x\in X.$$

    \end{itemize}
    Then, $\left(e^{t(A+B)}\right)_{t\geq0}$ is uniformly exponentially stable, that is,
    $$\exists\, \widehat{M}\geq1,\,\,\widehat{\omega}>0\quad\text{such that}\quad \|e^{t(A+B)}\|_{\mathcal{L}}\leq \widehat{M}e^{-\widehat{\omega} t},\,\,\forall\, t>0.$$
\end{thm}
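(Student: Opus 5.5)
The plan is to combine Lemma~\ref{quasi-compact-preservatn} with the spectral decomposition of Theorem~\ref{thm-quasicompact-spectral-decomp}. We use strong stability to remove every spectral mode in the closed right half-plane.

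First, we check that $(e^{tA})_{t\geq0}$ is quasi-compact. By \hypref{H3}{H3)}, $\|e^{tA}\|_{\mathrm{ess}}\leq\|e^{tA}\|_{\mathcal L}\leq M_Ae^{-\omega_At}\to0$, so $w_{\mathrm{ess}}(A)<0$. Together with analyticity, \hypref{H1}{H1)} and \hypref{H2}{H2)}, Lemma~\ref{quasi-compact-preservatn} then shows that $A+B$ generates a quasi-compact $C_0$-semigroup. We may apply Theorem~\ref{thm-quasicompact-spectral-decomp} to $\widetilde A:=A+B$. The set $\{\mu\in\sigma(\widetilde A):\Re\mu\geq0\}=\{\mu_1,\dots,\mu_m\}$ is finite and consists of poles of finite algebraic multiplicity. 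Moreover, $e^{t\widetilde A}=\sum_n T_n(t)+R(t)$ with $\|R(t)\|\leq \widehat M e^{-\widehat w t}$.

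The core step is to show that this set is empty using \hypref{H4}{H4)}. Suppose $\mu=\mu_n$ with $\Re\mu\geq0$. Since $\mu$ is a pole of $R(\cdot,\widetilde A)$, it is an eigenvalue of $\widetilde A$: a pole of order $k\geq1$ has $U_{-k}\neq0$, and $(\widetilde A-\mu)U_{-k}=0$. Take an eigenvector $x\neq0$ with $\widetilde Ax=\mu x$. Then $e^{t\widetilde A}x=e^{\mu t}x$, so $\|e^{t\widetilde A}x\|=e^{\Re\mu\, t}\|x\|\geq\|x\|>0$ for all $t$. This contradicts strong stability, so $m=0$ and $e^{t\widetilde A}=R(t)$, which decays exponentially.

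To avoid relying on the details of Theorem~\ref{thm-quasicompact-spectral-decomp}, an alternative is to show $s(\widetilde A)<0$ and invoke \cite[Corollary V.4.7]{EnNa2}. The argument above already shows that there is no spectrum with $\Re\mu\geq0$. Since only finitely many spectral points lie in $\{\Re\mu\geq w\}$ for any $w>w_{\mathrm{ess}}$, we get $s(\widetilde A)<0$.

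The main obstacle is the eigenvector step: one needs that a pole of the resolvent is an eigenvalue. I would handle it as above via $U_{-k}$, using the Laurent expansion identity $(\mu-\widetilde A)U_{-k}=U_{-k-1}=0$. Alternatively, one can note that the spectral projection $P_n=U_n$ has nonzero finite-dimensional range, which is invariant under $\widetilde A$ and contains an eigenvector. After that, the conclusion $\widehat M\geq1$ and $\widehat\omega>0$ follows directly from the remainder estimate.
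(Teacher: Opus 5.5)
Your proposal is correct and follows essentially the same route as the paper: quasi-compactness of $(e^{tA})_{t\geq0}$ from \textbf{H3)}, then Lemma~\ref{quasi-compact-preservatn} for $A+B$, then exclusion of every pole with $\Re\mu\geq0$ by an eigenvector contradicting \textbf{H4)}, so that $e^{t(A+B)}=R(t)$ decays exponentially. Producing the eigenvector from the leading Laurent coefficient $U_{-k}$ is only a cosmetic variant of the paper's finite-dimensional spectral-subspace argument; note a harmless sign slip there, since the recursion actually reads $(\widetilde A-\mu)U_{-k}=U_{-k-1}$, which vanishes anyway.
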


\begin{proof}
By hypothesis \hypref{H3}{H3)}, 
we can choose $t_0>0$ sufficiently large such that $M_Ae^{-\omega_A t_0}<1$. Then
$$
\|e^{t_0A}\|_{\mathrm{ess}}
\leq
\|e^{t_0A}\|_{\mathcal L}
\leq
M_Ae^{-\omega_A t_0}
<1.
$$
Hence $\left(e^{tA}\right)_{t\geq0}$ is quasi-compact; see
\cite[Proposition V.3.5]{EnNa}. Therefore, by Lemma
\ref{quasi-compact-preservatn}, 
the semigroup $\left(e^{t(A+B)}\right)_{t\geq0}$ is quasi-compact.

We now prove that the quasi-compactness of
$\left(e^{t(A+B)}\right)_{t\geq0}$, together with its strong stability,
implies uniform exponential stability. To this aim, first consider
$$
\mathbb{S}:=
\{\mu\in\sigma(A+B):\Re\mu\geq 0\}.
$$
Since $\left(e^{t(A+B)}\right)_{t\geq0}$ is quasi-compact, the spectral
decomposition theorem for quasi-compact semigroups
(Theorem \ref{thm-quasicompact-spectral-decomp}) implies that the set $\mathbb{S}$ either is empty or consists only of finitely many isolated spectral values $\mu_1,\ldots,\mu_m$. In the latter case, each of these spectral values is a pole of the resolvent $R(\lambda,A+B)$
with finite algebraic multiplicity. If we denote by $U_1,\ldots,U_m$ the spectral projections corresponding to each pole, then the semigroup admits the decomposition~\eqref{spectdecomp},where the remaining part
$R(t)$ decays exponentially in the operator norm; that is, there exist
constants $\widehat M\geq1$ and $\widehat\omega>0$ such that
\begin{equation}\label{expo-remain}
    \|R(t)\|_{\mathcal{L}}\leq \widehat M e^{-\widehat\omega t},
    \qquad t\geq 0.
\end{equation}

We claim that $\mathbb{S}=\emptyset$. Suppose, on the contrary, that
there exists $\mu_i\in\mathbb{S}$. Since $\mu_i$ is a pole of
$R(\lambda,A+B)$ of finite algebraic multiplicity, the corresponding
spectral projection $U_i$ is non-zero, and the spectral subspace $U_iX$ is
finite-dimensional and non-trivial. Moreover, $U_iX$ is invariant under
$A+B$, and the restricted operator
$
(A+B)|_{U_iX}
$
has spectrum
$$
\sigma\left((A+B)|_{U_iX}\right)=\{\mu_i\}.
$$
Since $U_iX$ is finite-dimensional, $\mu_i$ is an eigenvalue of
$(A+B)|_{U_iX}$. Hence there exists $0\neq x_i\in U_iX$ such that $(A+B)x_i=\mu_i x_i$. Therefore,
$
e^{t(A+B)}x_i=e^{\mu_i t}x_i$ for all $t\geq 0$. In particular,
$$\|e^{t(A+B)}x_i\|
=e^{\Re\mu_i t}\|x_i\|,\qquad t\geq0.$$
Since $\Re\mu_i\geq0$, the right-hand side does not
converge to $0$ as $t\to\infty$. This contradicts hypothesis
\hypref{H4}{H4)}. 
Hence,
$
\mathbb{S}=\emptyset.
$

Now, applying Theorem \ref{thm-quasicompact-spectral-decomp} again, there are no spectral
terms corresponding to spectral values with non-negative real part. Hence
the decomposition reduces to its exponentially decaying remainder. Therefore,
by \eqref{expo-remain}, there exist constants $\widehat M\geq1$ and
$\widehat\omega>0$ such that
$$
\|e^{t(A+B)}\|_{\mathcal L}
\leq
\widehat M e^{-\widehat\omega t},
\qquad t\geq0.
$$
Thus $\left(e^{t(A+B)}\right)_{t\geq0}$ is uniformly exponentially stable. 
\end{proof}
Using Corollary \ref{crandall-pazy-Quasi-Compact}, we can similarly state the following corollary.

\begin{Cor}\label{main-cor}
     Let $A$ be the generator of a uniformly exponentially stable semigroup $\left(e^{tA}\right)_{t\geq0}$ of Crandall-Pazy class  with parameter $\alpha\in(0,1]$ on a Banach space $X$. Consider a linear operator $B:D(B)\to X$ such that $B$ is $\theta$-subordinate to $A$ in the sense of Definition \ref{theta-sub}, with $0<\theta<\alpha\leq1$, and that $Be^{tA}$ is compact for all $t>0$. Then
$\left(e^{t(A+B)}\right)_{t\geq0}$ is uniformly exponentially stable provided it is strongly stable.
\end{Cor}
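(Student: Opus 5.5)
The argument copies the proof of Theorem~\ref{main}, using Corollary~\ref{crandall-pazy-Quasi-Compact} in place of Lemma~\ref{quasi-compact-preservatn} to obtain quasi-compactness of the perturbed semigroup.

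\textbf{Step 1: quasi-compactness of the unperturbed semigroup.} Uniform exponential stability gives $\|e^{tA}\|_{\mathcal L}\leq M_Ae^{-\omega_A t}$ for all $t\geq0$. We fix $t_0$ large enough that $M_Ae^{-\omega_A t_0}<1$. Then $\|e^{t_0A}\|_{\mathrm{ess}}\leq\|e^{t_0A}\|_{\mathcal L}<1$, so $\left(e^{tA}\right)_{t\geq0}$ is quasi-compact by \cite[Proposition V.3.5]{EnNa}.

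\textbf{Step 2: quasi-compactness of the perturbed semigroup.} The semigroup is of Crandall--Pazy class with parameter $\alpha$. Moreover, $B$ is $\theta$-subordinate with $0<\theta<\alpha$, and $Be^{tA}$ is compact for every $t>0$. So all hypotheses of Corollary~\ref{crandall-pazy-Quasi-Compact} hold. That corollary gives two things: $A+B$ with domain $D(A)$ generates a $C_0$-semigroup, and this semigroup $\left(e^{t(A+B)}\right)_{t\geq0}$ is quasi-compact.

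\textbf{Step 3: ruling out spectrum in the closed right half-plane.} We apply Theorem~\ref{thm-quasicompact-spectral-decomp} to $A+B$. The set $\mathbb S=\{\mu\in\sigma(A+B):\Re\mu\geq0\}$ is finite, and each of its points is a pole of $R(\cdot,A+B)$ with finite algebraic multiplicity. Suppose some $\mu_i\in\mathbb S$ exists.
\begin{itemize}
\item The spectral subspace $U_iX$ is finite-dimensional, nontrivial and invariant under $A+B$.
\item The restriction of $A+B$ to $U_iX$ has spectrum $\{\mu_i\}$, so it has an eigenvector $x_i\neq0$ with $(A+B)x_i=\mu_i x_i$.
\item Then $\|e^{t(A+B)}x_i\|=e^{\Re\mu_i t}\|x_i\|\geq\|x_i\|>0$ for all $t\geq0$.
\end{itemize}
This contradicts the assumed strong stability of $\left(e^{t(A+B)}\right)_{t\geq0}$. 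Hence $\mathbb S=\emptyset$.

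\textbf{Step 4: conclusion.} With $\mathbb S$ empty, the decomposition \eqref{spectdecomp} has no terms $T_n(t)$, so $e^{t(A+B)}=R(t)$. The bound $\|R(t)\|_{\mathcal L}\leq\widehat Me^{-\widehat w t}$ is then exactly uniform exponential stability.

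The only step needing care is Step 2, namely checking that every hypothesis of Corollary~\ref{crandall-pazy-Quasi-Compact} is met. The point to watch is the strict inequality $\theta<\alpha$, which is assumed in the statement. Once that is verified, the rest is identical to the proof of Theorem~\ref{main}.
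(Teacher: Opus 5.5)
Your proposal is correct and follows the paper's own route: the paper obtains this corollary by rerunning the proof of Theorem~\ref{main} with Corollary~\ref{crandall-pazy-Quasi-Compact} in place of Lemma~\ref{quasi-compact-preservatn}. You do the same, first getting quasi-compactness of $\left(e^{tA}\right)_{t\geq0}$ from uniform exponential stability and then using strong stability to exclude spectrum in the closed right half-plane.
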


From the proofs of Corollary \ref{Cor2} and Theorem \ref{main}, one can establish the preservation of uniform exponential stabilization of immediately differentiable semigroups under bounded perturbations in the case of general Banach spaces.

\begin{Cor}
Let $A$ be the generator of an immediately differentiable uniformly exponentially stable
$C_0$-semigroup $\left(e^{tA}\right)_{t\geq0}$ on a Banach space $X$.
Let $B$ be a bounded linear operator such that $Be^{tA}$ is compact
for all $t>0$. Then $\left(e^{t(A+B)}\right)_{t\geq0}$ is uniformly exponentially stable,
provided it is strongly stable.
\end{Cor}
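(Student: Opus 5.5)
The plan follows the proof of Theorem~\ref{main}, with Lemma~\ref{quasi-compact-preservatn} replaced by Corollary~\ref{Cor2}. First, uniform exponential stability of $\left(e^{tA}\right)_{t\geq0}$ gives quasi-compactness. Indeed, choose $t_0>0$ with $M_Ae^{-\omega_A t_0}<1$. Then
$$\|e^{t_0A}\|_{\mathrm{ess}}\leq\|e^{t_0A}\|_{\mathcal L}<1,$$
so \cite[Proposition V.3.5]{EnNa} shows that the semigroup is quasi-compact. Since $B\in\mathcal L(X)$, the operator $A+B$ with domain $D(A)$ generates a $C_0$-semigroup by the bounded perturbation theorem. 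The semigroup $\left(e^{tA}\right)_{t\geq0}$ is immediately differentiable and $Be^{tA}$ is compact for all $t>0$, so Corollary~\ref{Cor2} applies and $\left(e^{t(A+B)}\right)_{t\geq0}$ is quasi-compact.

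Second, we rule out spectrum of $A+B$ in the closed right half-plane. By Theorem~\ref{thm-quasicompact-spectral-decomp}, the set $\mathbb S=\{\mu\in\sigma(A+B):\Re\mu\geq0\}$ is finite and consists of poles of finite algebraic multiplicity. Suppose $\mu\in\mathbb S$. Its residue $U$ is a nonzero projection, and $UX$ is finite-dimensional and invariant. The restriction of $A+B$ to $UX$ has spectrum $\{\mu\}$, so there is an eigenvector $x\neq0$ with $(A+B)x=\mu x$. Then
$$\|e^{t(A+B)}x\|=e^{t\Re\mu}\|x\|\not\to0,$$
which contradicts strong stability. Hence $\mathbb S=\emptyset$. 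The decomposition \eqref{spectdecomp} then reduces to $e^{t(A+B)}=R(t)$ with $\|R(t)\|_{\mathcal L}\leq\widehat Me^{-\widehat\omega t}$, which is the claimed uniform exponential stability.

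The only delicate point is to make sure Corollary~\ref{Cor2} really covers this setting. Its proof invokes Step 5 of Lemma~\ref{quasi-compact-preservatn}, which uses the bound \eqref{C-0-A+B} on $\|e^{(t-s)(A+B)}\|_{\mathcal L}$. For bounded $B$ this bound holds for any $C_0$-semigroup, by exponential boundedness, so analyticity of $A+B$ is not needed. The integral over $[0,\varepsilon]$ is then $O(\varepsilon)$, because $\|Be^{sA}\|_{\mathcal L}\leq\|B\|_{\mathcal L}M_A$. The remaining steps need only norm continuity of $s\mapsto e^{sA}$ on $[\varepsilon,t]$, which immediate differentiability provides. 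With these checks in place the argument is complete.
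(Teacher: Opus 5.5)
Your proposal is correct and follows the paper's route: the paper obtains this corollary by combining Corollary~\ref{Cor2}, which gives quasi-compactness of $\left(e^{t(A+B)}\right)_{t\geq0}$ for bounded $B$ and immediately differentiable $\left(e^{tA}\right)_{t\geq0}$, with the argument of Theorem~\ref{main}, where strong stability rules out spectral values with $\Re\mu\geq0$ and the spectral decomposition reduces to the exponentially decaying remainder. Your check that Step 5 needs only exponential boundedness of $\left(e^{t(A+B)}\right)_{t\geq0}$ and the bound $\|Be^{sA}\|_{\mathcal L}\leq\|B\|_{\mathcal L}M_A$ is exactly the modification the paper makes in the proof of Corollary~\ref{Cor2}.
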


\begin{rk}
    For immediately differentiable semigroups, condition~\eqref{eqCP} is a sufficient condition for the semigroup to be a Gevrey semigroup of class $\delta$, for any~$\delta>1/\alpha$; see \cite[ Corollary 5.1.7]{Tay}. Hence, the above results (Lemma \ref{quasi-compact-preservatn} and Theorem \ref{main}) can be generalized to immediately differentiable semigroups of Gevrey regularity. For completeness, we recall the definition of a Gevrey semigroup below; see \cite[\S 5.1]{Tay}.
\end{rk}

\begin{Def}
A $C_0$-semigroup $\left(e^{tA}\right)_{t\ge 0}$ on a Banach space $X$ is of Gevrey
class $\delta > 0$ for $t > t_0$ iff $\left(e^{tA}\right)_{t\ge 0}$ is differentiable for $t > t_0$ and for every compact $K \hookrightarrow (t_0,+\infty)$ and any $\gamma > 0$
there exists a constant $C = C(\gamma,K)$ such that
$$
\|T^{(n)}(t)\|_{\mathcal{L}}\le C\gamma^n (n!)^\delta\quad \forall\, t\in K\, ,\ n = 0,1,2,\dots
$$
\end{Def}

\section{Application to a diffusion operator on a non-reflexive Banach space}
\label{app}
In this section, we apply Theorem \ref{main} to a one-dimensional perturbed diffusion
operator on the non-reflexive Banach space $X:=C([0,1]),$ equipped with the usual norm~$\|\cdot\|_\infty.$

Let $\mu>0$ and
consider the operator $A_\mu f:=f_{xx}-\mu f$ with domain
$$
D(A_\mu):=
\left\{
f\in C^2([0,1]): f'(0)=f'(1)=0
\right\}.
$$
Equivalently, $A_\mu=A-\mu I$, where $A$ denotes the one-dimensional
Neumann Laplacian $Af=f_{xx}$ on $X$. It is known that $A$ generates a contraction $C_0$-semigroup $\left(e^{tA}\right)_{t\geq0}$ (see \cite[Example II.2.12]{EnNa}) which, for any $f\in C([0,1])$ and $x\in[0,1]$, is represented as
\begin{equation}\label{semigRep}
\!\!\! e^{tA}f(x):=
\begin{cases}
\displaystyle
\frac{1}{\sqrt{4\pi t}}\int_0^1
\sum_{m\in\mathbb Z}
\left\{
e^{-\frac{(x+r+2m)^2}{4t}}
+
e^{-\frac{(x-r+2m)^2}{4t}}
\right\}
f(r)\,\mathrm{d}r,
& t>0,\\
f(x),
& t=0.
\end{cases}
\end{equation}

\begin{Prop}\label{prop-exponential-analyticSemi}
$A_\mu$ generates a uniformly exponentially stable analytic semigroup $\left(e^{tA_\mu}\right)_{t\geq 0}$ of bounded linear operators on $X$.
\end{Prop}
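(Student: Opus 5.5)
The plan is to split the statement into two parts: analyticity of $\left(e^{tA}\right)_{t\geq0}$ on $C([0,1])$, and exponential decay after the shift by $-\mu$.

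First, the shift. Since $A_\mu = A-\mu I$, we have $e^{tA_\mu}=e^{-\mu t}e^{tA}$ for all $t\geq0$. The semigroup $\left(e^{tA}\right)_{t\geq0}$ is a contraction semigroup (\cite[Example II.2.12]{EnNa}), so $\|e^{tA_\mu}\|_{\mathcal L}\leq e^{-\mu t}$. This gives uniform exponential stability with $M=1$ and $\omega=\mu>0$. The contraction property can also be read off from \eqref{semigRep}: the kernel is positive and integrates to $1$ in $r$, since it is the method-of-images kernel for the heat equation on $\mathbb R$, periodized and reflected. Analyticity is unaffected by bounded shifts. Hence everything reduces to showing that $A$ generates an analytic semigroup on $C([0,1])$.

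For analyticity, I would verify the criterion \cite[Theorem II.4.6]{EnNa}: a bounded semigroup is analytic iff $e^{tA}X\subset D(A)$ for $t>0$ and $\sup_{0<t\leq1}\|tAe^{tA}\|_{\mathcal L}<\infty$. Working directly with \eqref{semigRep}, for $t>0$ the kernel $k_t(x,r)$ is smooth in $(t,x)$. Each image term satisfies the heat equation, so $\partial_t k_t=\partial_{xx}k_t$. The symmetry $x\mapsto -x$ and the $2$-periodicity of the image sum give $\partial_x k_t(0,r)=\partial_x k_t(1,r)=0$, so $e^{tA}f\in D(A)$ and $Ae^{tA}f=\int_0^1\partial_{xx}k_t(\cdot,r)f(r)\,\mathrm dr$. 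Then I would estimate
\[
\sup_x\int_0^1|\partial_{xx}k_t(x,r)|\,\mathrm dr\leq \frac{C}{t},\qquad 0<t\leq1 .
\]
The $m$-sum over the interval $r\in[0,1]$ turns into an integral of $|\partial_{xx}G_t|$ over $\mathbb R$, where $G_t(y)=(4\pi t)^{-1/2}e^{-y^2/4t}$ is the Gaussian kernel. The scaling $y=\sqrt t\,z$ gives $\int_{\mathbb R}|G_t''(y)|\,\mathrm dy=c/t$. Concretely, the terms $x\pm r+2m$ with $r\in[0,1]$ and $m\in\mathbb Z$ cover $\mathbb R$ exactly once for each sign, so the sum of $L^1$-norms is at most $2\int_{\mathbb R}|G_t''|$. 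Together with $\|e^{tA}\|\leq1$, this yields $\|tAe^{tA}\|\leq C$ for $0<t\leq1$, and bounded analyticity follows. For $t>1$ the semigroup law $Ae^{tA}=Ae^{A}e^{(t-1)A}$ suffices.

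An alternative route is the sectorial resolvent estimate $\|R(\lambda,A)\|\leq C/|\lambda|$ on a sector, obtained from the explicit Neumann Green's function involving $\cosh(\sqrt\lambda\,\cdot)$. The kernel route is cleaner, so I would use it. The main obstacle is the careful bookkeeping in the image sum: justifying termwise differentiation (uniform convergence of the differentiated series on compacta in $t>0$) and establishing the covering argument that bounds the $L^1$-norm by the Gaussian one. Checking that $e^{tA}f\in C^2$ with the Neumann boundary conditions is routine by comparison.
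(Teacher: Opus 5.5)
Your proof is correct and follows the paper's route. Contractivity of $(e^{tA})_{t\geq0}$ gives $\|e^{tA_\mu}\|_{\mathcal L}\leq e^{-\mu t}$, and analyticity comes from \cite[Theorem II.4.6]{EnNa} by differentiating the image kernel twice, unfolding the $m$-sum into an integral over $\mathbb R$, and rescaling the Gaussian; the paper bounds $tA_\mu e^{tA_\mu}$ directly instead of passing analyticity through the shift, which makes no real difference.

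Two small tidy-ups. First, your scaling bound $\int_{\mathbb R}|G_t''|=c/t$ holds for every $t>0$, which is exactly the $\sup_{t>0}$ that theorem requires, so the restriction to $0<t\leq1$ is unnecessary. The semigroup-law step for $t>1$ would not supply that bound anyway, since it only yields $\|tAe^{tA}\|_{\mathcal L}\leq t\|Ae^{A}\|_{\mathcal L}$. Second, for fixed $x$, each of the families $x+r+2m$ and $x-r+2m$ (with $r\in[0,1]$, $m\in\mathbb Z$) covers only half of $\mathbb R$, and the two together tile it once. Your bound $2\int_{\mathbb R}|G_t''|$ is still valid, but not for the reason you state.
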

\begin{proof}
Note that, since $\left(e^{tA}\right)_{t\geq 0}$ is a contraction semigroup, we have
$$
\|e^{tA_\mu}\|_{\mathcal{L}}\leq e^{-\mu t}\|e^{tA}\|_{\mathcal{L}}\leq e^{-\mu t}.
$$
Thus, $\left(e^{tA_\mu}\right)_{t\geq 0}$ is uniformly exponentially stable.

Next, in order to show that $\left(e^{tA_\mu}\right)_{t\geq 0}$ is analytic, from \cite[Theorem II.4.6]{EnNa}, it is enough to show that $\mathrm{R}\left(e^{tA_\mu}\right)\subset D(A)$ and that, for all $t>0,$ we have 
    \begin{equation}\label{Toshowforanalyticity}
        \sup_{t>0}\|tA_\mu e^{tA_\mu}\|_{\mathcal L}< \infty.
    \end{equation} 
Clearly, the differentiation of the heat kernel in
\eqref{semigRep} shows that $\mathrm{R}(e^{tA})\subset D(A),
\,\, t>0.$
Since $D(A_\mu)=D(A)$ and
$e^{tA_\mu}=e^{-\mu t}e^{tA}$, we also have $\mathrm{R}(e^{tA_\mu})\subset D(A_\mu),\,\, t>0.$ Further, from \eqref{semigRep}, we have that, for all $t>0,$
    \begin{align*}
        |tA e^{tA}f(x)|\leq &\frac{\|f\|_{\infty}}{2}\sqrt{\frac{t}{\pi}}\frac{\mathrm{d}^2}{\mathrm{d}x^2}\int_0^1
\sum_{m\in\mathbb Z}
\left\{
e^{-\frac{(x+r+2m)^2}{4t}}
+
e^{-\frac{(x-r+2m)^2}{4t}}
\right\}
f(r)\,\mathrm{d}r\nonumber\\
\leq &\frac{\|f\|_{\infty}}{2}\sqrt{\frac{t}{\pi}}\Bigg[
    \int_0^1
    \sum_{m\in\mathbb Z}
    \left|
    \frac{(x+r+2m)^2}{4t^2}
    -
    \frac{1}{2t}
    \right|
    e^{-\frac{(x+r+2m)^2}{4t}}
    \,\mathrm{d}r \nonumber\\
    &\quad+
    \int_0^1
    \sum_{m\in\mathbb Z}
    \left|
    \frac{(x-r+2m)^2}{4t^2}
    -
    \frac{1}{2t}
    \right|
    e^{-\frac{(x-r+2m)^2}{4t}}
    \,\mathrm{d}r\Bigg] . 
    \end{align*}
    For fixed $x\in[0,1]$, by the changes of variables
$y=x+r+2m$ and $y=x-r+2m$, respectively, we get
\begin{align}\label{weGetUseful}
    \left|tA e^{tA}f(x)\right|
    &\leq
    \|f\|_{\infty}\sqrt{\frac{t}{\pi}}
    \int_{\mathbb R}
    \left|
    \frac{y^2}{4t^2}
    -
    \frac{1}{2t}
    \right|
    e^{-\frac{y^2}{4t}}
    \,\mathrm{d}y ,
\end{align}
where we have used the periodicity of the sum inside the integrand, that is,

\begin{align*}
&\int_0^1 \sum_{m\in\mathbb Z}
\left|
\frac{(x\pm r+2m)^2}{4t^2}
-
\frac{1}{2t}
\right|
e^{-\frac{(x\pm r+2m)^2}{4t}}
\,\mathrm{d}r\\
&=
\begin{cases}
\displaystyle
\int_x^{x+1} \sum_{m\in\mathbb Z}
\left|
\frac{(y+2m)^2}{4t^2}
-
\frac{1}{2t}
\right|
e^{-\frac{(y+2m)^2}{4t}}
\,\mathrm{d}y,
& \text{for }y= x+r+2m,\\
\displaystyle
\int_{x-1}^{x} \sum_{m\in\mathbb Z}
\left|
\frac{(y+2m)^2}{4t^2}
-
\frac{1}{2t}
\right|
e^{-\frac{(y+2m)^2}{4t}}
\,\mathrm{d}y,
& \text{for }y=x-r+2m,
\end{cases}
\end{align*}
which gives
\begin{multline*}
\int_0^1 \sum_{m\in\mathbb Z}
\left|
\frac{(x\pm r+2m)^2}{4t^2}
-
\frac{1}{2t}
\right|
e^{-\frac{(x\pm r+2m)^2}{4t}}
\,\mathrm{d}r\\
\leq
\int_0^2 \sum_{m\in\mathbb Z}
\left|
\frac{(y+2m)^2}{4t^2}
-
\frac{1}{2t}
\right|
e^{-\frac{(y+2m)^2}{4t}}
\,\mathrm{d}y\\
=
\sum_{m\in\mathbb Z}
\int_{2m}^{2m+2}
\left|
\frac{y^2}{4t^2}
-
\frac{1}{2t}
\right|
e^{-\frac{y^2}{4t}}
\,\mathrm{d}y=
\int_{\mathbb R}
\left|
\frac{y^2}{4t^2}
-
\frac{1}{2t}
\right|
e^{-\frac{y^2}{4t}}
\,\mathrm{d}y.
\end{multline*}
Using the change of variable $z=y/(2\sqrt{t})$ in  \eqref{weGetUseful}, we obtain
\begin{align*}\label{useitlater}
     \left|tA e^{tA}f(x)\right|
    &\leq
    \frac{2\|f\|_{\infty}}{\sqrt{\pi}}
    \int_{\mathbb R}
    \left|
    z^2-\frac12
    \right|
    e^{-z^2}\,\mathrm{d}z <\infty.
\end{align*}
By taking the supremum over $x\in[0,1]$, 
we obtain that
\begin{equation*}
    \|tAe^{tA}f\|_{\infty}
    \leq
    \frac{2\|f\|_{\infty}}{\sqrt{\pi}}
    \int_{\mathbb R}
    \left|z^2-\frac12\right|
    e^{-z^2}\,\mathrm{d}z,
    \qquad t>0.
\end{equation*}
Hence,
\begin{equation*}\label{uniA}
    \sup_{t>0}\|tAe^{tA}\|_{\mathcal L}
    \leq
    \frac{2}{\sqrt{\pi}}
    \int_{\mathbb R}
    \left|z^2-\frac12\right|
    e^{-z^2}\,\mathrm{d}z
    <\infty.
\end{equation*} 
In combination with 
the contractivity of
$\left(e^{tA}\right){t\geq0}$, we get that
\begin{multline*} \sup_{t>0}\|tA_\mu e^{tA_\mu}\|_{\mathcal L}=\sup_{t>0}\left[\|t(A-\mu I)e^{t(A-\mu I)}\|_{\mathcal L}\right]\\ \leq\sup_{t>0}\left[e^{-\mu t}\|tAe^{tA}\|_{\mathcal L}+\mu te^{-\mu t}\|e^{tA}\|_{\mathcal L}\right]<\infty, \end{multline*}
which proves~\eqref{Toshowforanalyticity}, and thus $\left(e^{tA_\mu}\right)_{t\geq0}$ is an analytic semigroup.
\end{proof}

Since $f\in C^2([0,1])\subset W^{2,\infty}(0,1)$, by  virtue of the Gagliardo-Nirenberg interpolation inequality for bounded domains
\cite[Theorem 1.2]{LiZhang2022GN}, we obtain the following lemma.
\begin{Lem}\label{interpoineq}
For $f\in C^2([0,1])$, there exists a constant
$C>0$ such that
$$
\|f_x\|_\infty
\leq
C\left(
\|f\|_\infty^{1/2}\|f_{xx}\|_\infty^{1/2}
+
\|f\|_\infty
\right). 
$$
\end{Lem}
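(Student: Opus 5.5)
A direct proof is short enough that I would not rely on the general Gagliardo--Nirenberg theorem. Instead I would use an elementary Landau--Kolmogorov type argument on $[0,1]$. Fix $f\in C^2([0,1])$, $x\in[0,1]$ and a step $h\in(0,1/2]$. I choose a point $y\in[0,1]$ with $|y-x|=h$; this is always possible, taking $y=x+h$ if $x\le 1/2$ and $y=x-h$ otherwise. Taylor's formula with remainder gives
$$
f(y)=f(x)+(y-x)f_x(x)+\tfrac12 (y-x)^2 f_{xx}(\xi)
$$
for some $\xi$ between $x$ and $y$. Solving for $f_x(x)$ yields the pointwise bound
$$
|f_x(x)|\le \frac{2\|f\|_\infty}{h}+\frac{h}{2}\|f_{xx}\|_\infty,\qquad 0<h\le \tfrac12 .
$$
Since $x$ is arbitrary, the same bound holds for $\|f_x\|_\infty$.

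Next I optimize over $h$. Set $h^\ast:=2\sqrt{\|f\|_\infty/\|f_{xx}\|_\infty}$, which minimizes the right-hand side when $\|f_{xx}\|_\infty\neq 0$. There are two cases.
\begin{itemize}
\item If $h^\ast\le 1/2$, I take $h=h^\ast$. Then both terms equal $\|f\|_\infty^{1/2}\|f_{xx}\|_\infty^{1/2}$, so $\|f_x\|_\infty\le 2\|f\|_\infty^{1/2}\|f_{xx}\|_\infty^{1/2}$.
\item If $h^\ast>1/2$ (this includes $f_{xx}\equiv 0$), then $\|f_{xx}\|_\infty< 16\|f\|_\infty$. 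I take $h=1/2$, which gives $\|f_x\|_\infty\le 4\|f\|_\infty+\tfrac14\|f_{xx}\|_\infty\le 8\|f\|_\infty$.
\end{itemize}
Combining the two cases gives the lemma with $C=8$.

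The only delicate point is the boundary of the bounded interval. On $\mathbb R$ the additive term $\|f\|_\infty$ would not be needed. On $[0,1]$ it is necessary, because the step $h$ cannot be taken arbitrarily large, and this is exactly what the second case handles. Choosing $y$ on the side of $x$ where there is room keeps $y\in[0,1]$, so the argument needs no extension of $f$ beyond the interval. Alternatively, one could simply invoke \cite[Theorem 1.2]{LiZhang2022GN} with $p=q=r=\infty$, $j=1$, $m=2$, $a=1/2$ on the bounded domain $(0,1)$, noting that $C^2([0,1])\subset W^{2,\infty}(0,1)$. I would still give the elementary argument above, since it is self-contained and produces an explicit constant.
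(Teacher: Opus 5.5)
Your proof is correct, but it follows a different route from the paper. The paper gives no argument beyond invoking the Gagliardo--Nirenberg inequality on bounded domains \cite[Theorem 1.2]{LiZhang2022GN} via $C^2([0,1])\subset W^{2,\infty}(0,1)$, which is the alternative you mention at the end. You derive the estimate by hand instead. A Taylor expansion toward the interior with step $h\le 1/2$ keeps $y\in[0,1]$ and gives $\|f_x\|_\infty\le 2\|f\|_\infty/h+\tfrac{h}{2}\|f_{xx}\|_\infty$. The case split on whether the unconstrained minimizer $h^\ast$ is admissible then yields either $2\|f\|_\infty^{1/2}\|f_{xx}\|_\infty^{1/2}$ or $8\|f\|_\infty$, hence $C=8$. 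You should set aside $f\equiv 0$ explicitly: there $h^\ast$ is undefined, though the claim is trivial.

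The citation is shorter and sits inside a theory covering all exponents and dimensions. However, it leaves the reader to check that the endpoint choice $p=q=r=\infty$, $j=1$, $m=2$, $a=1/2$ is admissible in the cited theorem. Your argument is self-contained and shows exactly why the additive $\|f\|_\infty$ term is needed on a bounded interval. It also produces an explicit constant, which would make the subordination constant in Proposition~\ref{prop2} explicit as well.
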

We now define the perturbation operator $B: D(B)\subset X\to X$ by
$$
Bf:=p(x)f_x(x),
\qquad
D(B):=C^1([0,1]),
$$
where $p\in C^1([0,1])$ is a real-valued function satisfying
\begin{equation}\label{hyp-p}
\left\{
\begin{array}{l}
p(0)=p(1)=0,\\
p_x(x)\geq -2\mu,
\qquad x\in[0,1].
\end{array}
\right.
\end{equation}
\begin{Prop}\label{prop2}
    $B$ is $1/2$-subordinate to $A_\mu$ in the sense of Definition \ref{theta-sub}.
\end{Prop}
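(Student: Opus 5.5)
We need to show that $D(A_\mu)\subseteq D(B)$ and that there is a constant $c>0$ with $\|Bf\|_\infty\le c\,\|A_\mu f\|_\infty^{1/2}\|f\|_\infty^{1/2}$ for every $f\in D(A_\mu)$. The inclusion is immediate, since $D(A_\mu)\subset C^2([0,1])\subset C^1([0,1])=D(B)$.

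For the estimate, we start from $\|Bf\|_\infty\le \|p\|_\infty\|f_x\|_\infty$ and apply Lemma~\ref{interpoineq}:
\[
\|Bf\|_\infty\le C\|p\|_\infty\left(\|f\|_\infty^{1/2}\|f_{xx}\|_\infty^{1/2}+\|f\|_\infty\right).
\]
Both terms on the right must then be controlled by $\|A_\mu f\|_\infty^{1/2}\|f\|_\infty^{1/2}$. The key ingredient is a coercivity bound for $f\in D(A_\mu)$:
\[
\|f\|_\infty\le \mu^{-1}\|A_\mu f\|_\infty .
\]
This follows from $\|e^{tA_\mu}\|_{\mathcal L}\le e^{-\mu t}$ (Proposition~\ref{prop-exponential-analyticSemi}), since then $R(0,A_\mu)=-A_\mu^{-1}=\int_0^\infty e^{tA_\mu}\,dt$ has norm at most $1/\mu$. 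Alternatively, one can argue with the maximum principle: at a maximum point $x_0$ of $|f|$ with, say, $f(x_0)>0$, the Neumann conditions give $f''(x_0)\le 0$ even when $x_0$ is an endpoint. The endpoint case needs a little care, because one has to use $f'(x_0)=0$ together with a Taylor expansion, so the semigroup argument is cleaner and I would use it.

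With the coercivity bound in hand, both terms close:
\begin{itemize}
\item Since $f_{xx}=A_\mu f+\mu f$, we get $\|f_{xx}\|_\infty\le \|A_\mu f\|_\infty+\mu\|f\|_\infty\le 2\|A_\mu f\|_\infty$.
\item For the lower-order term, $\|f\|_\infty=\|f\|_\infty^{1/2}\|f\|_\infty^{1/2}\le \mu^{-1/2}\|A_\mu f\|_\infty^{1/2}\|f\|_\infty^{1/2}$.
\end{itemize}
Combining these gives
\[
\|Bf\|_\infty\le C\|p\|_\infty\left(\sqrt2+\mu^{-1/2}\right)\|A_\mu f\|_\infty^{1/2}\|f\|_\infty^{1/2},
\]
which is the $1/2$-subordination of Definition~\ref{theta-sub} with $c=C\|p\|_\infty(\sqrt2+\mu^{-1/2})$. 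The conditions \eqref{hyp-p} on $p$ are not needed for this step; presumably they serve later, for the strong stability and generation arguments.

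The main obstacle is absorbing the additive lower-order term $\|f\|_\infty$ coming from Lemma~\ref{interpoineq}, because a purely multiplicative bound is required. This is exactly where the shift by $\mu>0$ is essential: it makes $A_\mu$ invertible with $\|A_\mu^{-1}\|\le 1/\mu$. For the unshifted Neumann Laplacian this step would fail, since constants lie in its kernel.
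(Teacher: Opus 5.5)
Your proof is correct and follows the paper's argument step for step: Lemma~\ref{interpoineq}, then $\|f_{xx}\|_\infty\le 2\|A_\mu f\|_\infty$, then absorbing the lower-order term, ending with the same constant $C\|p\|_\infty(\sqrt2+\mu^{-1/2})$. The only difference is how you get $\|f\|_\infty\le\mu^{-1}\|A_\mu f\|_\infty$: the paper uses the elliptic maximum principle at an extremum of $f$ (passing over the endpoint case you flag), whereas you use $\|e^{tA_\mu}\|_{\mathcal L}\le e^{-\mu t}$ and $R(0,A_\mu)=\int_0^\infty e^{tA_\mu}\,dt$, which is equally valid given Proposition~\ref{prop-exponential-analyticSemi}.
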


\begin{proof}
    Clearly $D(A_\mu)\subset D(B)$.  For every $f\in D(A_\mu)$, from Lemma \ref{interpoineq} we obtain
\begin{align}\label{56}
\|Bf\|_\infty
&=
\|p f_x\|_\infty \leq
\|p\|_\infty\|f_x\|_\infty\nonumber\\
&\leq
C\|p\|_\infty
\left(
\|f\|_\infty^{1/2}\,\|f_{xx}\|_\infty^{1/2}
+
\|f\|_\infty
\right).
\end{align}
Note that the elliptic maximum principle ensures that
\begin{equation}\label{ellpmaxinq}
    \|f\|_\infty
\leq
\frac1\mu\|A_\mu f\|_\infty,
\qquad f\in D(A_\mu).
\end{equation} Indeed, let $x_0\in[0,1]$ be such that $f(x_0)=\max_{x\in[0,1]} f(x).$ Since $f'(0)=f'(1)=0$, by the maximum principle we have that $f_{xx}(x_0)\leq0.$ Therefore, $(A_\mu f)(x_0)
= f_{xx}(x_0)-\mu f(x_0)
\leq
-\mu f(x_0)$, and so $\mu f(x_0)
\leq
-\left(A_\mu f\right)(x_0)
\leq
\|A_\mu f\|_\infty.$ Applying a similar argument to $-f$ gives \eqref{ellpmaxinq}. 
Therefore,
$$
\|f_{xx}\|_\infty
\leq
\|A_\mu f\|_\infty+\mu\|f\|_\infty
\leq
2\|A_\mu f\|_\infty,
$$
which, together with \eqref{56} implies that
$$
\|Bf\|_\infty
\leq
C\|p\|_\infty\left(\sqrt{2}+\frac1{\sqrt{\mu}}\right)
\|A_\mu f\|_\infty^{1/2}
\|f\|_\infty^{1/2},
\qquad f\in D(A_\mu).$$
Thus $B$ is $1/2$-subordinate to $A_\mu$ in the sense of Definition \ref{theta-sub}.
\end{proof}
\begin{Prop}\label{prop3}
    $Be^{tA_\mu}$ is compact for all $t>0.$
\end{Prop}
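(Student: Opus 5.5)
Fix $t>0$. The plan is to show that $Be^{tA_\mu}$ maps the unit ball $\mathbb B_X$ of $X=C([0,1])$ into a bounded subset of $C^1([0,1])$. The Arzel\`a--Ascoli theorem will then give relative compactness in $C([0,1])$.

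We first factor
$$Be^{tA_\mu}=B\,e^{\frac t2 A_\mu}\,e^{\frac t2 A_\mu},$$
which exploits the smoothing of the semigroup twice. For $f\in\mathbb B_X$ put $g:=e^{\frac t2A_\mu}f$. By Proposition \ref{prop-exponential-analyticSemi}, $g\in D(A_\mu)$ with
$$\|g\|_\infty\le 1,\qquad \|A_\mu g\|_\infty\le C_t,$$
since $\|A_\mu e^{sA_\mu}\|_{\mathcal L}\le C/s$. Likewise $h:=e^{tA_\mu}f=e^{\frac t2A_\mu}g\in D(A_\mu)$. Because analytic semigroups map $X$ into $D(A_\mu^n)$ for every $n$, $h$ has $h, h_{xx}, A_\mu h$ all bounded by constants depending only on $t$. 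Better still, $A_\mu h=e^{\frac t2A_\mu}A_\mu g$, and applying Proposition \ref{prop-exponential-analyticSemi} again gives $\|A_\mu^2 h\|_\infty\le C'_t$. Hence $h_{xx}=A_\mu h+\mu h$ lies in $D(A_\mu)\subset C^2$, with $\|(h_{xx})_{xx}\|_\infty$ bounded uniformly over $f\in\mathbb B_X$.

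Next we bound the derivatives of $Bh=p\,h_x$. The derivative is $(p h_x)_x=p_x h_x+p\,h_{xx}$. Lemma \ref{interpoineq} together with \eqref{ellpmaxinq} bounds $\|h_x\|_\infty$ in terms of $\|h\|_\infty$ and $\|h_{xx}\|_\infty$, and we already have $\|h_{xx}\|_\infty\le 2\|A_\mu h\|_\infty\le C_t$. Since $p\in C^1$, this gives
$$\|Bh\|_\infty+\|(Bh)_x\|_\infty\le K_t$$
uniformly for $f\in\mathbb B_X$. This already suffices: we only need $h\in C^2$ with a uniform $C^2$ bound, which follows from $\|A_\mu e^{tA_\mu}\|_{\mathcal L}\le C/t$ and \eqref{ellpmaxinq}. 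The extra regularity of $h_{xx}$ is not needed.

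Consequently $\{Be^{tA_\mu}f: f\in\mathbb B_X\}$ is uniformly bounded and uniformly Lipschitz with constant $K_t$, hence equicontinuous. Arzel\`a--Ascoli gives relative compactness in $C([0,1])$, so $Be^{tA_\mu}$ is compact.

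The main obstacle is the uniform $C^2$ bound for $e^{tA_\mu}f$. It rests on the analytic estimate $\|A_\mu e^{tA_\mu}\|_{\mathcal L}\le C/t$, which is available from the proof of Proposition \ref{prop-exponential-analyticSemi}, combined with the elliptic estimate \eqref{ellpmaxinq} to control $\|h_{xx}\|_\infty$. Converting the $C^2$ bound into a $C^1$ bound for $p\,h_x$ is then routine via Lemma \ref{interpoineq}.
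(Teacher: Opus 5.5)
Your proof is correct, but it takes a different route from the paper.

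\textbf{The paper's route.} The paper uses the explicit kernel. It writes $Be^{tA_\mu}f(x)=\int_0^1\Gamma_t(x,\xi)f(\xi)\,d\xi$ with $\Gamma_t(x,\xi)=p(x)e^{-\mu t}\Gamma(t,x,\xi)$, where $\Gamma$ is the $x$-derivative of the Neumann heat kernel in \eqref{semigRep}. Uniform boundedness and equicontinuity of the image of $\mathbb B_X$ then follow from the uniform continuity of $\Gamma_t$ on $[0,1]^2$, and Arzel\`a--Ascoli concludes.

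\textbf{Your route.} You stay at the operator level. The analytic bound $\|A_\mu e^{tA_\mu}\|_{\mathcal L}\le C/t$ from Proposition~\ref{prop-exponential-analyticSemi}, together with $D(A_\mu)\subset C^2([0,1])$, gives a uniform bound on $\|h_{xx}\|_\infty$ for $h=e^{tA_\mu}f$, $f\in\mathbb B_X$. Via Lemma~\ref{interpoineq} this bounds $\|h_x\|_\infty$, hence gives a uniform $C^1$ bound on $p\,h_x$, and uniform Lipschitz continuity yields equicontinuity.

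\textbf{Comparison.} The paper's argument needs only continuity of $p$ and of the differentiated kernel series, but it is tied to having an explicit kernel. Yours uses the kernel only indirectly, through Proposition~\ref{prop-exponential-analyticSemi}. It also gives an explicit Lipschitz modulus of order $1/t$. It would carry over to diffusion operators without explicit kernels, provided the $C/t$ estimate and the description of the domain are available. The cost is that you use $p\in C^1$, which is assumed here anyway.

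\textbf{Simplifications.} As you note yourself, the splitting $e^{tA_\mu}=e^{\frac t2A_\mu}e^{\frac t2A_\mu}$ and the bound on $A_\mu^2h$ are unnecessary and can be deleted. You also do not need Lemma~\ref{interpoineq}: the Neumann condition $h_x(0)=0$ gives $|h_x(x)|=\bigl|\int_0^x h_{xx}\bigr|\le\|h_{xx}\|_\infty$ directly.
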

\begin{proof}
   Fix $t>0$. We will prove that $Be^{tA_\mu}$ maps bounded subsets of $C([0,1])$
into relatively compact subsets of $X$. Let $\mathscr B:=\{f\in C([0,1])\,\,|\,\,\|f\|_\infty\leq 1\}$ and the family $\mathscr F:=\{Be^{tA_\mu}f \,\,|\,\,f\in \mathscr B\}$. 

Note that $$
Be^{tA_\mu}f(x)
=
p(x)e^{-\mu t}\int_0^1\Gamma(t,x,\xi)f(\xi)\,d\xi .
$$ where
$$ \Gamma(t,x,\xi) = -\frac{ \sum_{m\in\mathbb Z} \left\{ (x+\xi+2m)e^{-\frac{(x+\xi+2m)^2}{4t}} + (x-\xi+2m)e^{-\frac{(x-\xi+2m)^2}{4t}} \right\}}{2t\sqrt{4\pi t}}, $$ due to \eqref{semigRep}. Since $p(x)e^{-\mu t}{\Gamma(t,x,\xi)}=:\Gamma_t(x,\xi)\in C([0,1]\times[0,1])$, for every $f\in\mathscr B$ and every $x\in[0,1]$,
we have
$$|Be^{tA_\mu}f(x)|
\leq\int_0^1 |\Gamma_t(x,\xi)|\,|f(\xi)|\,d\xi
\leq
\|\Gamma_t\|_\infty .$$ Thus, the family $\mathscr F$ is uniformly bounded in $X$.

Furthermore, for any $x,y\in[0,1]$ and $f\in\mathscr B$,
we obtain
$$
\begin{aligned}
|Be^{tA_\mu}f(x)-Be^{tA_\mu}f(y)|
&\leq
\int_0^1 |\Gamma_t(x,\xi)-\Gamma_t(y,\xi)|\,|f(\xi)|\,d\xi  \\
&\leq
\sup_{\xi\in[0,1]}|\Gamma_t(x,\xi)-\Gamma_t(y,\xi)|.
\end{aligned}
$$
Since $\Gamma_t$ is continuous on the compact set $[0,1]\times[0,1]$, it is uniformly
continuous. Therefore,
$$
\sup_{\xi\in[0,1]}|\Gamma_t(x,\xi)-\Gamma_t(y,\xi)|
\longrightarrow 0
\qquad \text{as }\ |x-y|\to0.
$$
Therefore, $\mathscr F$ is also equicontinuous. Thus, thanks to the Arzelà-Ascoli theorem~\cite[Theorem 6.12]{Robinson2020FA}, $\mathscr F$ is relatively compact in
$C([0,1])$. Therefore, $Be^{tA_\mu}$ is a compact operator on $X$ for every $t>0$. 
\end{proof}

We now prove the uniform exponential stability of the following system:
\begin{equation}\label{perturbedsystem}
\left\{
\begin{array}{ll}
u_t(t,x)=u_{xx}(t,x)+p(x)u_x(t,x)-\mu u(t,x),
& x\in(0,1),\ t>0,\\
u_x(t,0)=u_x(t,1)=0,
& t>0,\\
u(0,x)=u_0(x),
& x\in[0,1],
\end{array}
\right.
\end{equation}
where $\mu>0$ and $p(x)$ satisfies assumption \eqref{hyp-p}. System~\eqref{perturbedsystem} can be recast as the abstract Cauchy problem \begin{equation}\label{abstractperturbedsystem} \left\{ \begin{array}{ll} \dot{u}(t)=(A_\mu+B)u(t), & t>0,\\[1mm] u(0)=u_0\in X, \end{array} \right. \end{equation}
\begin{rk}
    The system \eqref{perturbedsystem} may be interpreted as 1D
diffusion-drift-reaction model on a bounded medium with homogeneous Neumann boundary conditions, which describe insulated  endpoints. The term $u_{xx}$ represents
diffusion, while the zeroth-order term $-\mu u$,
with $\mu>0$, models the reaction term. The term $p(x)u_x$ describes transport due to the drift field $p$.  
\end{rk}

\begin{thm}
The operator $A_\mu+B$ generates a uniformly exponentially stable analytic
semigroup $\left(e^{t(A_\mu+B)}\right)_{t\geq0}$ on $X$.
\end{thm}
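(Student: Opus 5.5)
We will apply Theorem \ref{main} with $A$ replaced by $A_\mu$. Hypotheses \textbf{H1)}, \textbf{H2)} and \textbf{H3)} are already available. By Proposition \ref{prop-exponential-analyticSemi}, $A_\mu$ generates a uniformly exponentially stable analytic semigroup, with $M_A=1$ and $\omega_A=\mu$. Proposition \ref{prop2} gives $1/2$-subordination of $B$, and Proposition \ref{prop3} gives compactness of $Be^{tA_\mu}$ for all $t>0$. As in Step 5 of the proof of Lemma \ref{quasi-compact-preservatn}, $\theta$-subordination gives $A_\mu$-bound zero. Hence $A_\mu+B$ with domain $D(A_\mu)$ generates an analytic $C_0$-semigroup, by \cite[Theorem 3.2.1]{Pazy}. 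So the only remaining task is \textbf{H4)}, strong stability of $\left(e^{t(A_\mu+B)}\right)_{t\geq0}$.

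To verify \textbf{H4)}, I would prove the stronger statement that the semigroup is a contraction semigroup on $C([0,1])$ with a dissipativity margin. Equivalently, I would show directly $\|e^{t(A_\mu+B)}u_0\|_\infty\le e^{-\kappa t}\|u_0\|_\infty$ for some $\kappa>0$ via a maximum principle argument.

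The first attempt is the elliptic version, following \eqref{ellpmaxinq}. Take $f\in D(A_\mu)$ and let $x_0$ be a point of maximum of $|f|$; we may assume $f(x_0)>0$. If $x_0\in(0,1)$, then $f_x(x_0)=0$. If $x_0\in\{0,1\}$, the Neumann condition gives $f_x(x_0)=0$ as well. In every case $p(x_0)f_x(x_0)=0$ and $f_{xx}(x_0)\le 0$. Hence for $\lambda>-\mu$,
\[
(\lambda+\mu) f(x_0) \le \lambda f(x_0) - f_{xx}(x_0) - p(x_0)f_x(x_0) + \mu f(x_0) = \bigl((\lambda - A_\mu - B)f\bigr)(x_0),
\]
and therefore $(\lambda+\mu)\|f\|_\infty\le \|(\lambda-A_\mu-B)f\|_\infty$. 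This says $A_\mu+B+\mu I$ is dissipative on $C([0,1])$. Since $A_\mu+B$ already generates a $C_0$-semigroup, $\lambda-A_\mu-B$ is surjective for large $\lambda$. Lumer--Phillips then yields $\|e^{t(A_\mu+B)}\|_{\mathcal L}\le e^{-\mu t}$, which gives strong stability, and in fact uniform stability directly. Note that this argument does not use $p(0)=p(1)=0$ or $p_x\ge-2\mu$; those hypotheses seem intended for an $L^2$-type energy estimate. Alternatively, one could use the parabolic maximum principle on classical solutions $u(t)=e^{t(A_\mu+B)}u_0$ with $u_0\in D(A_\mu)$: compare $e^{\mu t}u$ with its initial maximum, then extend by density of $D(A_\mu)$ and boundedness of the semigroup on $[0,T]$.

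Finally, with \textbf{H4)} in hand, Theorem \ref{main} gives uniform exponential stability, and analyticity was established above. The main obstacle is the rigorous justification of the maximum-principle and dissipativity step. It needs care in two places: the boundary maxima, handled by the Neumann condition, and the sign conventions in the Lumer--Phillips theorem on $C([0,1])$, where a duality-set functional is a point evaluation $\pm\delta_{x_0}$ at a maximizer of $|f|$. If that route stalls, I would fall back on an energy estimate in $L^2$ using $p_x\ge-2\mu$. Integration by parts gives $\int p u_x u = -\tfrac12\int p_x u^2$ thanks to $p(0)=p(1)=0$. That estimate would show that no eigenvalue of $A_\mu+B$ has $\Re\lambda\geq 0$. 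Eigenfunctions lie in $C^2$, so this also applies in $X$, and the proof of Theorem \ref{main} only uses the absence of such eigenvalues.
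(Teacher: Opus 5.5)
Your argument is correct, but it takes a different route from the paper.

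\textbf{The paper's route.} The paper verifies \textbf{H4)} through an $L^2$-energy identity. It multiplies the stationary problem $u_{xx}+pu_x-\mu u=0$ by $u$ and integrates by parts, using $p(0)=p(1)=0$ and $p_x\geq-2\mu$ to show that this problem has only the trivial solution. It then appeals to LaSalle's invariance principle to obtain strong stability, and only afterwards invokes Theorem~\ref{main} to upgrade to uniform exponential stability.

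\textbf{Your route.} You work directly in the sup norm. The maximum-principle computation at a maximizer of $|f|$ shows that $A_\mu+B+\mu I$ is dissipative on $C([0,1])$; the Neumann condition kills $f_x$ at boundary maxima, and $f_{xx}(x_0)\leq 0$ there, exactly as in \eqref{ellpmaxinq}. Since $A_\mu+B$ is already known to be a generator, Lumer--Phillips gives $\|e^{t(A_\mu+B)}\|_{\mathcal L}\leq e^{-\mu t}$.

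\textbf{Comparison.}
\begin{itemize}
\item Your route is more elementary and sharper, with explicit constants $\widehat M=1$ and $\widehat\omega=\mu$.
\item It avoids the rather implicit LaSalle step, which tacitly needs a Lyapunov functional and precompact orbits.
\item As you note, it shows that the hypotheses \eqref{hyp-p} on $p$ are superfluous.
\item The cost is that it bypasses the abstract machinery. Once dissipativity is known, neither Lemma~\ref{quasi-compact-preservatn} nor Theorem~\ref{main} is needed, so your final appeal to Theorem~\ref{main} is redundant. Only the generation and analyticity part of Step~5 is used. The paper's route, by contrast, is designed precisely to illustrate Theorem~\ref{main} on a non-reflexive space.
\end{itemize}

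Your fallback energy argument is essentially the paper's computation, carried out for all (complex) eigenvalues with $\Re\lambda\geq0$ rather than only for $\lambda=0$. The proof of Theorem~\ref{main} uses \textbf{H4)} only to exclude such eigenvalues of the quasi-compact perturbed semigroup. That version therefore closes the paper's own argument more cleanly than the appeal to LaSalle does.
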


\begin{proof}
By Propositions \ref{prop-exponential-analyticSemi}, \ref{prop2} and
\ref{prop3}, hypotheses 
\hypref{H1}{H1)},
\hypref{H2}{H2)} and
\hypref{H3}{H3)} are satisfied. Hence, by
Theorem \ref{main}, the semigroup $\left(e^{t(A_\mu+B)}\right)_{t\geq0}$ is
uniformly exponentially stable, provided it is strongly stable.

It remains to verify the strong stability of $\left(e^{t(A_\mu+B)}\right)_{t\geq0}$. Owing to LaSalle's invariance principle, it is sufficient to show that the elliptic boundary value problem
\begin{equation}\label{ellipbound}
    \left\{ \begin{array}{ll} u_{xx}+p(x)u_x-\mu u=0, & x\in(0,1),\\ u_x(0)=u_x(1)=0, \end{array} \right.
\end{equation} admits only the trivial solution $u\equiv0$. To this aim, let $u\in C^2([0,1])$ be a solution to~\eqref{ellipbound}. Multiplying the equation by $u$ and integrating over $(0,1)$, we obtain $$ \int_0^1 u u_{xx}\,\mathrm{d}x + \int_0^1 p(x)u u_x\,\mathrm{d}x - \mu\int_0^1 u^2\,\mathrm{d}x =0. $$ Using integration by parts and the boundary condition in~\eqref{ellipbound}, we get $$ \int_0^1 u u_{xx}\,\mathrm{d}x = -\int_0^1 |u_x|^2\,\mathrm{d}x. $$ Moreover, since $p(0)=p(1)=0$, we also have $$ \int_0^1 p(x)u u_x\,\mathrm{d}x = \frac12\int_0^1 p(x)(u^2)_x\,\mathrm{d}x = -\frac12\int_0^1 p_x(x)u^2\,\mathrm{d}x. $$ Therefore, \begin{equation}
    \label{cdf}
    \int_0^1 |u_x|^2\,\mathrm{d}x + \int_0^1\left(\mu+\frac12\, p_x(x)\right)u^2\,\mathrm{d}x =0.
\end{equation}
Assumption~\eqref{hyp-p} ensures that $p_x(x)\geq-2\mu$ for all $x\in[0,1]$, and therefore both terms in \eqref{cdf} are nonnegative. We first obtain
$$\int_0^1|u_x|^2\,\mathrm{d}x=0,$$which implies that $u$ is constant, say $u(x)=c$ for all $x\in[0,1]$. Upon substituting this into the second nonnegative term in \eqref{cdf}, we get $$c^2\int_0^1\left(\mu+\frac12\, p_x(x)\right)\,\mathrm{d}x =0,$$ which implies $c^2(2\mu+(p(1)-p(0)))=0.$
As $\mu\neq0$ and, by assumption \eqref{hyp-p}, $p(0)=p(1)=0$, it follows that $c=0$. Hence, $u\equiv0$ on $[0,1]$. Therefore, applying Theorem \ref{main}, we conclude that system \eqref{perturbedsystem} is uniformly exponentially stable.
\end{proof}
\begin{rk}
Note that assumption \eqref{hyp-p} rules out the case $p_x(x)=-2\mu$ on the entire interval $[0,1]$. Indeed, if this were the case, then $0=p(1)-p(0)=\int_0^1p_x(x)\,\mathrm{d}x=-2\mu,$ which contradicts the fact that $\mu>0$. 
This differs from the assumption on the perturbation of uniformly parabolic problems on reflexive Banach spaces considered in~\cite[Assumption 3.1.4, \S 3.1]{SLRRS-PC-RG}, where an additional strict inequality involving the coefficient of the first-order term was imposed on a nonempty open subset of $[0,1]$, and unique
continuation \cite{Hormander63} was then invoked to conclude that the solution vanished throughout the domain. In particular, the present argument avoids the use of a unique continuation principle. 
\end{rk}
\section{Lack of uniform stabilization for $A$-bounded perturbations of generators}\label{sec4}

It is important to note that the proof of Theorem \ref{main} cannot be directly extended to $A$-bounded perturbations in the sense of Definition \ref{Abound}.  The obstruction appears in Step \hyperlink{stp5}{{\textbf{5}}} of the proof of Lemma \ref{quasi-compact-preservatn}. In the analytic
case, the estimate~\eqref{Analytic-A} combined with $\theta$-subordination gives~\eqref{B-T-estimate-2}, that is,
\[
\|e^{(t-s)(A+B)}Be^{sA}x\|
   \leq C(t)s^{-\theta}\|x\|\,,\,\, 0<s\leq t,\,\, x\in X,
\]
and the singularity $s^{-\theta}$ is integrable near $s=0$ because
$\theta<1$. 
However, for a merely A-bounded perturbation in the sense of Definition~\ref{Abound}, the estimate $$\|Be^{sA}x\|
\leq
a\|Ae^{sA}x\|+b\|e^{sA}x\|$$
together with \eqref{Analytic-A} gives
$$\|Be^{sA}x\|
\leq
C(t)\left(1+\frac1s\right)\|x\|.$$ Consequently, the analogue of estimate~\eqref{B-T-estimate-2} would contain the
non-integrable singularity $s^{-1}$. Therefore, the convergence cannot be obtained by the same argument. 
In fact, such stabilization cannot hold for $A$-bounded perturbations in general. We provide below three distinct counterexamples.
   
\begin{CE}\label{cont1}
There exist a Hilbert space $H$, a generator $A$ of a uniformly exponentially stable analytic semigroup $\left(e^{tA}\right)_{t\geq0}$ on $H$, and $A$-bounded perturbations $B:D(A)\to H$ that are not $\theta$-subordinate to $A$ for any $\theta\in[0,1)$, such that $Be^{tA}$ is compact for every $t>0$, and the semigroup generated by $A+B$ is strongly stable but not uniformly exponentially stable.

\end{CE}

\begin{proof}
The proof is by construction. Let $H=\ell^2(\mathbb N; \C)$ and define the operator $A:D(A)\subset H\to H$ by
$Ax=(-nx_n)_{n\in\mathbb N},$
with domain
$$D(A)=\left\{
x=(x_n)_{n\in\mathbb N}\in \ell^2(\mathbb N) \,\Bigg|\,
\sum_{n=1}^{\infty} n^2|x_n|^2<\infty
\right\}.$$
Note that $A$ is the multiplication operator $M_q$ on $\ell^2(\mathbb N)$ generated by the sequence $q= (-n)_{n\in\mathbb N}$, such that $M_qx= (-nx_n)_{n\in\mathbb N}.$ It is well known that the operator $A=M_q$ generates the strongly continuous multiplication
semigroup $$e^{tA}x
=
\left(e^{-nt}x_n\right)_{n\in\mathbb N},
\qquad t\geq0,$$
(see \cite[\S I.4]{EnNa}). Moreover, since $\sigma(A)
=
\{-n:n\in\mathbb N\}
\subset(-\infty,0)$, for every $\delta\in(0,\pi/2)$ we have
$\Sigma_{\delta+\frac{\pi}{2}}\cap\sigma(A)=\emptyset.$
Therefore, $\Sigma_{\delta+\frac{\pi}{2}}
\subset
\mathbb C\setminus\sigma(A)
=
\rho(A),$ where
$$\Sigma_{\delta+\frac{\pi}{2}} :=\left\{\lambda\in\C\,\, \Big |\,\, |\arg \lambda|\leq \frac{\pi}{2}+\delta\right\}-\{0\}.$$
It follows from Theorem in \cite[\S II.4.32(i)]{EnNa} that
$\left(e^{tA}\right)_{t\geq0}$ is a bounded analytic semigroup of angle
$\delta$ for every $\delta\in(0,\pi/2)$. 
Furthermore, for every $x\in H$, we have
$$\|e^{tA}x\|^2=\sum_{n=1}^{\infty} e^{-2nt}|x_n|^2\leq e^{-2t}\sum_{n=1}^{\infty} |x_n|^2= e^{-2t}\|x\|^2\qquad t\geq0,$$
and therefore $\left(e^{tA}\right)_{t\geq0}$ is uniformly exponentially
stable.

Now we construct a class of perturbations $B:D(A)\to H$ of the form 
\[
Bx=\left(\left(n-\frac{1}{n^{\beta}}+i \,n\right)x_n\right)_{n\in\mathbb{N}},\quad \beta>0,\,\ i \,^2=-1,
\]
such that $B$ is $A$-bounded but not $\theta$-subordinate for any $\theta\in[0,1)$. Indeed, for $x\in D(A)$ and $\beta>0$, we have
\begin{multline*}
    \|Bx\|^2=
\sum_{n=1}^\infty
\left|n-\frac{1}{n^{\beta}}+i \,n\right|^2 |x_n|^2 \\=
\sum_{n=1}^\infty
\left(\left(1-\frac{1}{n^{\beta+1}}\right)^2+1\right)n^2|x_n|^2 \leq 2\sum_{n=1}^\infty n^2|x_n|^2
=
2\|Ax\|^2.
\end{multline*}
Hence $B$ is $A$-bounded.

Now consider, for each $n\in\mathbb N$,  
\begin{equation}\label{counterdelta}
    \textbf{e}_n=(0,0,\ldots,0,1,0,\ldots),
\end{equation}
where the entry $1$ appears in the $n$-th position. Then $\textbf{e}_n\in D(A)$, $\|\textbf{e}_n\|=1$, and
$\|A\textbf{e}_n\|=n.$ Moreover, we have
$
\|B\textbf{e}_n\|
= \left|n-\frac1{n^\beta}+i \,n\right|.
$
Therefore, if $B$ were $\theta$-subordinate to $A$ for some $c>0$ and
$\theta\in[0,1)$, then we would have
\begin{equation*}\label{thetasubreq}
    \left|n-\frac1{n^\beta}+i \,n\right|
    =
    \left(\left(n-\frac1{n^\beta}\right)^2+n^2\right)^{1/2}
\leq
c\|A\textbf{e}_n\|^\theta\|\textbf{e}_n\|^{1-\theta}
=
cn^\theta,
\end{equation*}
for every $n\in\mathbb N$. Dividing both sides by $n^\theta$, we obtain
\begin{equation}\label{c-bound}
    \frac{\left(\left(n-\frac1{n^\beta}\right)^2+n^2\right)^{1/2}}{n^\theta}
\leq c,
\qquad n\in\mathbb N.
\end{equation}
However,
$$
\frac{\left(\left(n-\frac1{n^\beta}\right)^2+n^2\right)^{1/2}}{n^\theta}
=
n^{1-\theta}
\left(\left(1-\frac1{n^{\beta+1}}\right)^2+1\right)^{1/2}
\geq n^{1-\theta}
\to\infty,
$$
as $n\to\infty,$ because $\theta<1$. This contradicts the boundedness of the left-hand side by the fixed constant $c$ in \eqref{c-bound}. Hence $B$ cannot be $\theta$-subordinate to $A$ for any $\theta\in[0,1)$.

We now want to prove that $Be^{tA}$ is compact for every $t>0$. For any $t>0$, and $x\in H$, the operator $Be^{tA}$ is given by
$$
Be^{tA}x
=
\left(\left(n-\frac1{n^\beta}+i \,n\right)e^{-nt}x_n\right)_{n\in\mathbb N}.
$$
Recall that an operator $Kx=(a_nx_n)_{n\in\mathbb N}$ from
$\ell^2(\mathbb N)$ into itself is compact if and only if
$\lim_{n\to\infty}|a_n|=0$~\cite[Example 4, \S 5.24]{NaylorSell1982}. Since
$$
\left|\left(n-\frac1{n^\beta}+i \,n\right)e^{-nt}\right|
\to0
\qquad\text{as }n\to\infty,
$$
the operator $Be^{tA}$ is compact on $\ell^2(\mathbb N)$.

Finally, the perturbed operator is given by
$$
(A+B)x
=
\left(\left(-\frac{1}{n^\beta}+i \,n\right)x_n\right)_{n\in\mathbb N},$$
with $D(A+B)=D(A).$ Thus $A+B$
generates the $C_0$-semigroup
$$e^{t(A+B)}x=\left(e^{\left(-\frac{1}{n^\beta}+i \,n\right)t}x_n\right)_{n\in\mathbb N},
\qquad t\geq0.$$
Observe that, for every $x\in H$, we have
\begin{equation}\label{4strong}
\|e^{t(A+B)}x\|^2=\sum_{n=1}^\infty e^{-\frac{2t}{n^\beta}}|x_n|^2,
\end{equation}
and for each fixed $n$, $e^{-2t/n^\beta}\to0$ as $t\to\infty$. Moreover, we have 
\begin{equation}\label{5strong}
    e^{-\frac{2t}{n^\beta}}|x_n|^2\leq |x_n|^2.
\end{equation}
Therefore, by the dominated convergence theorem, \eqref{5strong} and \eqref{4strong} imply the strong stability of $\left(e^{t(A+B)}\right)_{t\geq0}$, that is,
$$\|e^{t(A+B)}x\|\to0
\qquad\text{as }t\to\infty.$$

On the other hand, we show that the semigroup
$\left(e^{t(A+B)}\right)_{t\geq0}$ is not uniformly exponentially stable. We argue by contradiction. 
Suppose that there exist $M\geq1$ and $\omega>0$ such that
\[
\|e^{t(A+B)}x\|
\leq
Me^{-\omega t}\|x\|,
\qquad t\geq0,\ x\in\ell^2(\mathbb N).
\]
For each $n\in\mathbb N$, let $\textbf{e}_n$ be as in \eqref{counterdelta}. Then
$
e^{t(A+B)}\textbf{e}_n
=
e^{\left(-\frac1{n^\beta}+in\right)t}\textbf{e}_n.
$
Therefore, by the Cauchy--Schwarz inequality, we have
$$
e^{-t/n^\beta}
=
\left|
\left\langle e^{t(A+B)}\textbf{e}_n,\textbf{e}_n\right\rangle
\right|
\leq
\|e^{t(A+B)}\textbf{e}_n\|\|\textbf{e}_n\|
\leq
Me^{-\omega t}. $$
Taking $t=n^\beta$, we obtain
$
e^{-1}\leq Me^{-\omega n^\beta},
\, n\in\mathbb N$, where the right-hand side goes to zero as $n\to\infty$. This gives a contradiction, and hence
$\left(e^{t(A+B)}\right)_{t\geq0}$ is not uniformly exponentially stable. 
\end{proof}

\begin{CE}\label{cont2}
There exist a reflexive Banach space $X$ and a family of generators $A$ of uniformly exponentially stable $C_0$-semigroups on $X$, as well as $A$-bounded perturbations $B:D(A)\to X$, such that $B$ is not $\theta$-subordinate to $A$ for any $\theta\in[0,1)$, while the semigroup generated by $A+B$ is strongly stable but not uniformly exponentially stable.
\end{CE}
\begin{proof}
We construct a class of counterexamples on $X=L^p(\mathbb R_+)$, $1<p<\infty$. Let $\alpha>1$ be fixed. Define the operator $A:D(A)\subset L^p(\mathbb R_+)\to L^p(\mathbb R_+)$ by

\begin{equation}\label{defA}
    Af=\alpha f'+(1-\alpha)f,
\end{equation}
with domain $D(A):=W^{1,p}(\mathbb R_+)$, and the perturbation $B:D(A)\to X$ by
\begin{equation}\label{defB}
    Bf=(1-\alpha)f'+(\alpha-1)f.
\end{equation} 
Then $(A+B)f=f'$ with $D(A+B) = D(A)$, which generates the left-translation semigroup \cite[Proposition 1, \S II.2.10]{EnNa} given by
$ e^{s(A+B)}f(x)=f(x+s),
\, s\geq0. $
Moreover, this semigroup is strongly stable but not uniformly exponentially stable
\cite[Example 1.3(i), \S V.1.a]{EnNa}. Furthermore, note that the  $C_0$-semigroup generated by $A$ can be represented by

\begin{equation}\label{e^{sA}f}
    e^{sA}f(x)=e^{(1-\alpha)s}f(x+\alpha s),
\qquad s\geq0,\ x\in\mathbb R_+.
\end{equation} This semigroup is uniformly exponentially stable. Indeed, \begin{multline*}
\|e^{sA}f\|^p= e^{p(1-\alpha)s}
\int_0^\infty |f(x+\alpha s)|^p\,dx \\
= e^{p(1-\alpha)s} \int_{\alpha s}^\infty |f(y)|^p\,dy
\leq e^{p(1-\alpha)s}\|f\|^p\, .
\end{multline*}
Therefore,
$ \|e^{sA}f\|
\leq e^{-(\alpha-1)s}\|f\|,
\, s\geq0.
$ Since $\alpha>1$, it follows that $\left(e^{sA}\right)_{s\geq0}$ is uniformly
exponentially stable.

It remains to show that $B$ is $A$-bounded but not $\theta$-subordinate to $A$
for any~$\theta\in[0,1)$. To this aim, note that, from \eqref{defA} and \eqref{defB}, we have 
$$
Bf=\frac{1-\alpha}{\alpha}Af+\frac{\alpha-1}{\alpha}f.
$$
Consequently, for every $f\in D(A)$, we have
$$
\|Bf\|
\leq
\frac{\alpha-1}{\alpha}\|Af\|
+
\frac{\alpha-1}{\alpha}\|f\|.
$$
Thus $B$ is $A$-bounded.
Now we show that $B$ is not $\theta$-subordinate to $A$ for any~$\theta\in[0,1)$. Let $\phi\in C_c^\infty(0,\infty)$ be such that
$\phi'\neq0$, and define
\begin{equation}\label{def_n}
    f_n(x)=\frac{n^{1/p}\phi(nx)}{\|\phi\|},
\qquad n\in\mathbb {N}.
\end{equation}

Then $f_n\in D(A)$, and it is easy to check that $\|f_n\|
= 1, \, \|f_n'\|
=n\|\phi'\|/{\|\phi\|}.$ Therefore, by the reverse triangle inequality, we get $$\|Af_n\|= \|\alpha f_n'-(\alpha-1)f_n\|\geq
\alpha\|f_n'\|
- (\alpha-1)\|f_n\|=
\alpha n\frac{\|\phi'\|}{\|\phi\|} - \alpha + 1,
$$
which implies that $\|Af_n\|\to\infty$ as $n\to\infty.$
Moreover, using the expression of~$B$ in terms of $A$ and applying the reverse triangle inequality  again, we obtain
\begin{equation}\label{reverseBfineq}
    \|Bf_n\|\geq \frac{\alpha-1}{\alpha}\|Af_n\|-\frac{\alpha-1}{\alpha}\|f_n\|.
\end{equation}
Suppose by contradiction that $B$ is $\theta$-subordinate to $A$ for some
$\theta\in[0,1)$. Then there exists a constant $c>0$ such that
$$
\|Bf\|
\leq
c\|Af\|^\theta
\|f\|^{1-\theta},
\qquad f\in D(A).
$$ Applying this inequality to $f=f_n$ and using \eqref{reverseBfineq}, we get \begin{align*}
  \frac{\alpha-1}{\alpha}\|Af_n\|-\frac{\alpha-1}{\alpha}\|f_n\|
  \leq c\|Af_n\|^\theta
\|f_n\|^{1-\theta}.
\end{align*}
Since $\|f_n\|=1$, we have $$
\frac{\alpha-1}{\alpha}\|Af_n\|^{1-\theta}
- \frac{\alpha-1}{\alpha}\|Af_n\|^{-\theta}
\leq c.
$$ Letting $n\to\infty$, this leads to a contradiction, since $\|Af_n\|\to\infty$ as $n\to\infty$, and~$\theta<1$. Hence $B$ is not $\theta$-subordinate to $A$ for any $\theta\in[0,1)$.
\end{proof}
\begin{rk}
    Note that, in Counterexample \ref{cont2}, $Be^{tA}$ is not
compact for any $t>0$. In fact, $Be^{tA}$ is not even bounded on
$L^p(\mathbb R_+)$. Indeed, from~\eqref{defB} and~\eqref{e^{sA}f}, we have $$Be^{tA}f(x) = (\alpha-1)e^{(1-\alpha)t} \left(-f'(x+\alpha t)+f(x+\alpha t)\right).$$
Fix $t>0$. Choose $\varphi\in C_c^\infty(0,\infty)$ with $\varphi'\neq0$, and define $$g_n(x)=
\frac{n^{1/p}\varphi(n(x-\alpha t))}{\|\varphi\|},
\qquad n\in\mathbb N.$$
Then $\|g_n(\cdot\,+\alpha t)\|=\|g_n\|=1$ and $\|g'_n(\cdot\,+\alpha t)\|=\|g_n'\|=n\|\varphi'\|/\|\varphi\|.$ Therefore, we have
\begin{multline*}
    \|Be^{tA}g_n\|=
(\alpha-1)e^{(1-\alpha)t}
\left\|-g_n'(\cdot+\alpha t)+g_n(\cdot+\alpha t)\right\| \\
\geq
(\alpha-1)e^{(1-\alpha)t}
\left(\|g_n'\|-\|g_n\|\right)\\
=(\alpha-1)e^{(1-\alpha)t}
\left(
n{\|\varphi'\|}/{\|\varphi\|}-1
\right)
\to\infty
\quad\text{as}\quad n\to\infty .
\end{multline*} Since $\|g_n\|=1$ for all $n\in\mathbb{N}$, it follows that $Be^{tA}$ is not bounded on
$L^p(\mathbb R_+)$. Hence $Be^{tA}$ cannot be compact for any $t>0$.
\end{rk}
\begin{rk} It is worth noting that the $C_0$-semigroup \eqref{e^{sA}f} generated by $A$ in Counterexample~\ref{cont2} is not analytic. Indeed, analyticity would require that $e^{tA}X\subset D(A)$ for every $t>0$ (see \cite[Theorem II.4.6]{EnNa}). Fix $t>0$ and choose \[ f_t:=\mathbf{1}_{[\alpha t+1,\,\alpha t+2]} \in L^p(\mathbb R_+), \] where $\mathbf{1}_{\mathcal{U}}$ denotes the indicator function of a set $\mathcal{U}$, defined by \[ \mathbf{1}_{\mathcal{U}}(x) := \begin{cases} 1, & x\in \mathcal{U},\\ 0, & x\notin \mathcal{U}. \end{cases} \] Then \[ e^{tA}f_t(x) = e^{(1-\alpha)t} \mathbf{1}_{[\alpha t+1,\alpha t+2]}(x+\alpha t) = e^{(1-\alpha)t}\mathbf{1}_{[1,2]}(x). \] Since $\mathbf{1}_{[1,2]} \notin W^{1,p}(\mathbb R_+)$, we obtain $e^{tA}f_t\notin D(A),$ and therefore $\left(e^{tA}\right)_{t\geq0}$ is not analytic.
\end{rk}
Finally, we show that uniform exponential stabilization  fails for $\theta$-subordinate perturbations when hypothesis \hypref{H2}{H2)} does not hold. This shows that compactness along the trajectories is an essential assumption in our stabilization result.
\begin{CE}\label{cont3}
There exist a Hilbert space $H$, a generator $A$ of a uniformly exponentially
stable analytic semigroup $\left(e^{tA}\right)_{t\geq0}$ on $H$, and a
$\theta$-sub\-or\-di\-nate perturbation $B:D(A)\to H$ such that $Be^{tA}$ is not compact for any ~$t>0$, and the semigroup generated by $A+B$ is strongly stable but not uniformly exponentially stable.
\end{CE}
\begin{proof}
The proof is by construction. Let $H=\ell^2(\mathbb N)$ and define
$A:H\to H$ by
$$
Ax=-x,\qquad x\in H.
$$
Then $D(A)=H$, and $A$ generates the analytic semigroup
$$
e^{tA}x=e^{-t}x,\qquad t\geq0.
$$
Moreover,
$$
\|e^{tA}x\|=e^{-t}\|x\|,
\qquad t\geq0,
$$
and therefore $\left(e^{tA}\right)_{t\geq0}$ is uniformly exponentially
stable. Now define $B:H\to H$ by
$$
Bx=\left(\left(1-\frac1n\right)x_n\right)_{n\in\mathbb N}.
$$
Then $B$ is bounded on $H$, and hence $B$ is $0$-subordinate to $A$.

Next, for every $t>0$, we have
$$
Be^{tA}x
=
Be^{-t}x
=
\left(e^{-t}\left(1-\frac1n\right)x_n\right)_{n\in\mathbb N}.
$$
Using the fact that a diagonal operator
$Kx=(a_nx_n)_{n\in\mathbb N}$ on $\ell^2(\mathbb N)$ is compact if and only if
$a_n\to0$ \cite[Example 4, \S 5.24]{NaylorSell1982}, we see that $Be^{tA}$ is not compact. Indeed,
$$
e^{-t}\left(1-\frac1n\right)\to e^{-t}\neq0,
\qquad n\to\infty.
$$
Thus, $Be^{tA}$ is not compact for any $t>0$. Finally, the perturbed operator is given by
$$
(A+B)x
=
\left(-\frac1n x_n\right)_{n\in\mathbb N}.
$$
Since $A+B$ is bounded, it generates the uniformly continuous semigroup
$$
e^{t(A+B)}x
=
\left(e^{-t/n}x_n\right)_{n\in\mathbb N},
\qquad t\geq0.
$$
For any fixed $n\in\mathbb N$, we have that $e^{-t/n}\to 0$ as $t\to\infty$. Moreover, notice that~$e^{-2t/n}|x_n|^2\leq |x_n|^2$ for all $n\in\mathbb N$ and $t\geq0.$
Therefore, by the dominated convergence theorem,
$$
\|e^{t(A+B)}x\|^2
=
\sum_{n=1}^{\infty}e^{-2t/n}|x_n|^2
\to0,
\qquad t\to\infty.
$$
Hence $\left(e^{t(A+B)}\right)_{t\geq0}$ is strongly stable. But,
$ \|e^{t(A+B)}\| =
\sup_{n\in\mathbb N}e^{-t/n} =1$ for  $t\geq0.$
Therefore the semigroup cannot satisfy an estimate of the form
$$
\|e^{t(A+B)}\|\leq Me^{-\omega t},
\qquad t\geq0,
$$
for some $M\geq1$ and $\omega>0$. Hence
$\left(e^{t(A+B)}\right)_{t\geq0}$ is strongly stable but not uniformly
exponentially stable.
\end{proof}

\appendix

\bibliography{biblio3}
\bibliographystyle{plain}

\end{document}